\newcommand{\PP}{\mathbb{P}}
\newcommand{\AAA}{\mathbb{A}}
\newcommand{\ZZ}{\mathbb{Z}}
\newcommand{\NN}{\mathbb{N}}
\newcommand{\QQ}{\mathbb{Q}}
\newcommand{\RR}{\mathbb{R}}
\newcommand{\F}{\mathbb{F}}
\newcommand{\cE}{\mathcal{E}}
\newcommand{\M}{\mathbf{M}}
\newcommand{\ve}{\varepsilon}
\newcommand{\uu}{\underline}
\newcommand{\abs}[1]{\left\lvert#1\right\rvert}
\DeclareMathOperator{\rank}{rank}
\DeclareMathOperator{\Pic}{Pic}
\newtheorem{theorem}{Theorem}[section]
\newtheorem{corollary}[theorem]{Corollary}
\newtheorem{lemma}[theorem]{Lemma}
\theoremstyle{definition}
\newtheorem{remark}[theorem]{Remark}
\begin{document}
	
	\begin{frontmatter}[classification=text]
		
		\title{Counting integer points on affine surfaces with a side condition} 
		
		\author[tb]{Tim Browning \thanks{Supported by FWF grant (DOI 10.55776/P36278)}}
		\author[mv]{Matteo Verzobio\thanks{Supported by European Union's Horizon 2020 research and 
				innovation program under the Marie Sk\l odowska-Curie Grant Agreement No. 
				101034413.}}
		
		\begin{abstract}
			We extend  work of Heath-Brown and Salberger, based on 
			the determinant method, 
			to
			provide a uniform upper bound for the number of integral points of bounded height 
			on an affine surface, which are subject to a polynomial congruence condition. 
			This is applied to get a new uniform bound for  points on diagonal quadric surfaces, and to a problem about the representation of integers as a sum of four unlike powers.
		\end{abstract}
	\end{frontmatter}
	
	
		\section{Introduction}

	Let $f\in\ZZ[x_1,x_2,x_3]$  be an absolutely irreducible polynomial
	of degree $d$. 
	The determinant method of Bombieri--Pila \cite{BP} has proved a tremendously useful tool for bounding the number $N(f;B)$ of zeros  $\uu{x}=(x_1,x_2,x_3)\in \ZZ^3$ of $f$ with $|x_1|,|x_2|,|x_3|\leq B$, for a large parameter $B$. 
	As noticed by Pila \cite{pila}, it is possible to apply the bounds in \cite{BP}  to prove that 
	$N(f;B)=O_{d,\ve}(B^{1+1/d+\ve})$ for any $\ve>0$, where the implied constant only depends on $d$ and $\ve$. The determinant method has been extensively developed, most notably by Heath-Brown \cite{annal,cime} and by Salberger \cite{salberger}. It follows from Salberger's work \cite[Cor.~7.3]{salberger} that 
	\begin{equation}\label{eq:s1}
		N(f;B)=O_{d,\ve}(B^{2/\sqrt{d}+\ve}+B^{1+\ve}),
	\end{equation}
	which is now the state of the art for this problem. 
	
	In this paper we shall study a variant of this counting problem in which 
	lopsided boxes are allowed, and moreover, in which an additional side condition is imposed, in the form of a congruence.  Let $g\in \ZZ[x_1,x_2,x_3]$ be a further non-constant polynomial and let $q\in \NN$. Under suitable hypotheses we shall investigate the size of the set 
	$$
	S_q(f,g;\uu{B})=\left\{\uu{x}\in \ZZ^3: |x_i|\leq B_i,~ f(\uu{x})=0,~g(\uu{x})\equiv 0 \bmod{q}\right\},
	$$
	where  $\uu{B}=(B_1,B_2,B_3)$, with  $B_1,B_2,B_3\geq 2$.
	We simply write 
	$S_q(f,g;B)$, when  $B_1=B_2=B_3=B$.
	
	We will proceed under the  assumption that 
	$g$ does not depend on $x_1$, so that 
	$g\in \ZZ[x_2,x_3]$. When $g$ is homogeneous, one can cover the congruence condition 
	$g(\uu{x})\equiv 0 \bmod{q}$ by a small number of lattices. Techniques from the geometry of numbers then  reduce the problem to bounding 
	$\#S_1(\tilde f,0;\uu{C})$ for a suitable degree $d$ polynomial $\tilde f$ and a new triple 
	$ \uu{C}=( C_1,C_2,C_3)$  that depends on $q$.   The main aim of this paper is to harness 
	arguments of  Heath-Brown \cite{annal,cime} and Salberger \cite{salberger} to extract information about the case that 
	$g$ is not necessarily homogeneous. We shall apply our work to get new uniform bounds for the density of integral points on the quadric surface 
	$a_1x_1^2+a_2x_2^2+a_3x_3^2=n$, and on the threefold
	$x_1^k+x_2^{l}+x_3^{m}+x_4^k=n$.

	For given $f\in \ZZ[x_1,x_2,x_3]$ and $g\in \ZZ[x_2,x_3]$ we will need to introduce some notation 
	and conventions that will stay in place throughout this paper.  
	We will allow all implied constants to depend on the degrees of $f$ and $g$.
	Let $\uu{B}\in \RR^3$ with $B_1,B_2,B_3\geq 2$.
	We fix a strict total order $\prec$ on $\ZZ_{\geq 0}^3$. We shall assume that it is  linear, so that  $\uu{a}\prec \uu{b}$ and $\uu{c}\prec \uu{d}$ implies that 
	$\uu{a}+\uu{c}\prec  \uu{b}+\uu{d}$, for any $\uu{a},\uu{b},\uu{c},\uu{d}\in \ZZ_{\geq 0}^3$.
	We call any such order a linear total order. 
	The lexicographic order is an example of a linear total order (and it is this order that we shall use in 
	our applications).
	Among the monomials $\uu{x}^{\uu{m}}=x_1^{m_1}x_2^{m_2}x_3^{m_3}$ that appear in $f$ with a non-zero coefficient, we choose $\uu{m}\in \ZZ_{\geq 0}^3$ to be the  maximum for the order. We are going to work under the assumption that $\uu{m}\neq \uu{0}$. For this choice of $\uu{m}$, we put 
	\begin{equation}\label{def:T}
		T_{\uu{m}}=\uu{B}^{\uu{m}}=B_1^{m_1}B_2^{m_2}B_3^{m_3}.
	\end{equation} 
	Let 
	\begin{equation}\label{def:S}
		S=\max_{\uu{e}}\{\log \uu{B}^{\uu{e}}\},
	\end{equation}
	where the maximum runs over exponents $\uu{e}\in \ZZ_{\geq 0}^3$ such that  $\uu{x}^{\uu{e}}$ appears in $g$ with non-zero coefficient. (Note that $e_1=0$ in any such exponent vector by our assumption on $g$.)
	Let 
	\begin{equation}\label{eq:min-max}
		B'=\min\{B_1,B_2,B_3\} , \quad
		B=\max\{B_1,B_2,B_3\}, 
	\end{equation}
	and 
	\begin{equation}\label{eq:defK'}
		K=\exp\left(\sqrt\frac{\log B_1\log B_2\log B_3}{\log T_{\uu{m}}}\left(1-\frac{m_1\log B_1}{2S\log T_{\uu{m}}}\log q\right)\right).
	\end{equation} 
	Note that $\log T_{\uu{m}}$ and $S$ are not zero (since $\uu{m}\neq \uu{0}$ and $g$ is non-constant) and so $K$ is well-defined.
	Then, with this notation, the following 
	will be proved in Section~\ref{s:main}. 
	
	\begin{theorem}\label{thm:side}
		Let $\ve>0$ and $q\in \NN$.
		Let $f\in \ZZ[x_1,x_2,x_3]$ be absolutely irreducible and let  $g\in \ZZ[x_2,x_3]$.
		Assume that $\gcd(q,g(0,0))=1$ and 
		let $B,B',K$ be given by \eqref{eq:min-max} and \eqref{eq:defK'}.
		Then there exist 
		$f_1,\ldots, f_J\in \ZZ[x_1,x_2,x_3],$ 
		and a finite collection of points $Z\subset V(f)$, such that the following
		hold: 
		\begin{enumerate}
			\item $J=O_{\ve}(\max\{1,KB^\ve\})$;
			\item
			each $f_j$ is coprime to $f$ and has degree 
			$O_{\ve}(\log B/\log B')$, for $j\leq J$; 
			\item
			$\#Z=O_{\ve}(\max\{1,KB^\ve\}^{2})$; and 
			\item
			for each $\uu{x}\in S_q(f,g;\uu{B})\setminus Z$, 
			there exists $j\leq J$ such that
			$$
			f(x_1,x_2,x_3)=f_j(x_1,x_2,x_3)=0.
			$$
		\end{enumerate}
	\end{theorem}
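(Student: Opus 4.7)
The plan is to extend the global determinant method of Heath-Brown \cite{annal,cime} and Salberger \cite{salberger} to accommodate the side condition $g(\uu{x})\equiv 0 \pmod q$. The strategy is to cover $S_q(f,g;\uu{B})$, up to the exceptional set $Z$, by cells on each of which a single auxiliary polynomial vanishing at every point can be produced via a Siegel/Minkowski argument applied to an evaluation matrix. The classical version of this method, which ignores the congruence, would yield \eqref{eq:defK'} with $\log q$ set to zero; my task is to reinforce the argument so that the presence of $g\equiv 0 \pmod q$ subtracts from the exponent precisely the correction $\tfrac{m_1\log B_1\log q}{2S\log T_{\uu{m}}}\cdot\sqrt{(\log B_1\log B_2\log B_3)/\log T_{\uu{m}}}$ visible there.

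\textbf{The evaluation matrix.} Using the linear total order $\prec$ and the leading exponent $\uu{m}$ of $f$, the set $\mathcal{B}$ of monomials $\uu{x}^{\uu{e}}$ with $\uu{x}^{\uu{m}}\nmid \uu{x}^{\uu{e}}$ forms a $\ZZ$-basis of $\ZZ[\uu{x}]/(f)$. I filter $\mathcal{B}$ by the weighted size $\uu{B}^{\uu{e}}$ and retain the first $N$ monomials $\uu{x}^{\uu{e}_1},\dots,\uu{x}^{\uu{e}_N}$, with $N$ to be optimised. For any $N$ points $\uu{x}_1,\dots,\uu{x}_N\in S_q(f,g;\uu{B})$, consider the matrix $M=(\uu{x}_i^{\uu{e}_j})_{i,j}$, for which Hadamard gives the trivial bound $|\det M|\leq N^{N/2}\prod_j \uu{B}^{\uu{e}_j}$. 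A lower bound forcing $\det M=0$ will produce, via a short linear-algebra argument, a nontrivial $F\in\ZZ[\uu{x}]$ of degree $O_\ve(\log B/\log B')$ vanishing at all chosen points; $F$ is automatically coprime to $f$ because none of its monomials is a multiple of $\uu{x}^{\uu{m}}$, whereas any polynomial multiple of $f$ has its $\prec$-leading monomial divisible by $\uu{x}^{\uu{m}}$.

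\textbf{Using the congruence.} The $p$-adic lower bound on $v_p(\det M)$, obtained by grouping points by residue modulo suitable small primes, contributes the $\sqrt{(\log B_1\log B_2\log B_3)/\log T_{\uu{m}}}$ in the exponent of $K$. The new ingredient is this: for every $\uu{x}\in S_q(f,g;\uu{B})$ and every polynomial $h(\uu{x})$, we have $h(\uu{x})g(\uu{x})\equiv 0 \pmod q$. Plugging in $h(\uu{x})=\uu{x}^{\uu{e}_0}$ for each shift $\uu{e}_0$ with $e_{0,1}=0$ such that $\uu{e}_0+\uu{e}$ still indexes a monomial of $\mathcal{B}$ for every exponent $\uu{e}$ appearing in $g$, produces a $\ZZ/q$-linear relation among the corresponding columns of $M$, in which the column indexed by $\uu{e}_0$ carries the unit coefficient $g(0,0)$; this is where the hypothesis $\gcd(q,g(0,0))=1$ is used. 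Distinct shifts give independent relations, so $\det M$ will be divisible by $q^R$ with $R$ the number of admissible shifts. A weighted count of those shifts inside the filtered set $\mathcal{B}$, constrained by $e_{0,1}=0$ (this is the source of the factor $m_1$ in $K$) and by the filtration ceiling tied to $T_{\uu{m}}$ and $S$, should identify $R$ with precisely the quantity needed to produce the $\log q$ correction in \eqref{eq:defK'}.

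\textbf{Balancing and the main obstacle.} Comparing the Hadamard upper bound with the combined $p$- and $q$-adic lower bound and optimising over $N$ (equivalently over the auxiliary degree $D$) will force $\det M=0$ once $N\gg KB^{\ve}$. Covering $S_q(f,g;\uu{B})$ by $O_\ve(\max\{1,KB^\ve\})$ such cells produces the polynomials $f_1,\dots,f_J$ with the claimed degree bound and coprimality, while residual points in cells too small to support the argument, together with singularities, will assemble into $Z$ with $\#Z=O_\ve(\max\{1,KB^\ve\}^2)$ by a B\'ezout-style intersection count. \emph{The main obstacle} will be the quantitative analysis just sketched: counting exactly the admissible shifts $\uu{e}_0$ that survive within the filtered monomial set $\mathcal{B}$, verifying that their associated $\ZZ/q$-relations are linearly independent (not merely spanning the same subspace), and threading the extra $q$-divisibility through the $p$-adic Heath-Brown/Salberger framework so that the gain enters inside the square root of \eqref{eq:defK'} in the exact form stated.
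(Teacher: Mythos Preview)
Your overall architecture matches the paper's: filter the monomial basis by the weighted size $\uu{B}^{\uu{e}}$, compare the Hadamard upper bound on the evaluation determinant against $p$-adic and $q$-adic lower bounds, and then run Salberger's global covering to produce the $f_j$ and the exceptional set $Z$. The coprimality argument via the $\prec$-leading monomial is also exactly right.

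However, the $q$-adic step as you describe it is too weak to deliver the $\log q$ correction in $K$. Using only the relations $\uu{x}^{\uu{e}_0}g(\uu{x})\equiv 0\pmod q$ yields one factor of $q$ per admissible shift $\uu{e}_0$, and the number $R$ of such shifts (those with $e_{0,1}<m_1$ and $\log\uu{B}^{\uu{e}_0}\le Y-S$) is only
\[
R \asymp \frac{m_1 Y^2}{\log B_2\log B_3}=O(E).
\]
Both the Hadamard upper bound and the $p$-adic lower bound scale like $E^{3/2}$, so a $q$-contribution of order $E\log q$ becomes negligible as $E\to\infty$; carrying your argument through would recover the $q=1$ value of $K$ and not \eqref{eq:defK'}. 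Your ``weighted count of shifts'' cannot repair this, because what is lacking is not a sharper count but an extra order of magnitude.

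What is missing is the use of \emph{powers} of $g$. Normalise $g$ modulo $q$ to have constant term $1$ (this, not merely the triangular independence of the shift relations, is where $\gcd(q,g(0,0))=1$ is really used). For each exponent $\uu{e}$ in the filtered set with $e_1<m_1$, replace the corresponding column by $\uu{x}^{\uu{e}}g_q(\uu{x})^{\mu_{\uu{e}}}$, where
\[
\mu_{\uu{e}}=\left\lfloor \frac{Y-\log\uu{B}^{\uu{e}}}{S}\right\rfloor
\]
is the largest power keeping every resulting monomial inside the filtered set; since $g$ is free of $x_1$, the condition $e_1<m_1$ is preserved throughout. This makes that single column divisible by $q^{\mu_{\uu{e}}}$, and a direct computation gives
\[
\sum_{\uu{e}:\,e_1<m_1}\mu_{\uu{e}} \asymp \frac{m_1 Y^3}{S\log B_2\log B_3}=\Theta(E^{3/2}),
\]
now of the same order as the other terms and producing exactly the factor $\frac{m_1\log B_1}{2S\log T_{\uu{m}}}\log q$ inside the square root. (A small related correction: the factor $m_1$ in $K$ arises from allowing $e_1\in\{0,\dots,m_1-1\}$, not from fixing $e_{0,1}=0$.)
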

	
	We emphasise that the implied constants in this result only  depend on the choice of $\ve$ and on the degrees of $f$ and $g$, a convention that will remain in place throughout this paper. 
	The proof  relies crucially on the breakthrough work of Salberger \cite{salberger} and is based on the strategy he used to prove \cite[Lemma~3.2]{salberger}.
	In order to find the best possible bound, one chooses the linear total order that minimises $K$.
	When  $q=1$, we may assume without loss of generality that the values of $\log B_i$ are linearly independent over $\QQ$. One may then check that 
	$\uu{a}\prec \uu{b}$ if and only if $\uu{B}^{\uu{a}}< \uu{B}^{\uu{b}}$ is a linear total order on $\ZZ_{\geq 0}^3$.
	Thus we  may take 
	$$		K=\exp\left(\sqrt{\frac{\log B_1\log B_2\log B_3}{\max_{\uu{m}}T_{\uu{m}}}}\right)
	$$
	in \eqref{eq:defK'},
	where the maximum is taken over 
	all exponents $\uu{m}\in \ZZ_{\geq 0}^3$ such that  $\uu{x}^{\uu{m}}$ appears in $f$ with non-zero coefficient. Thus 
	Theorem \ref{thm:side} recovers work of the first author \cite[Lemma~1]{tb}, in which  a similar result without the side condition was already extracted from  \cite{cime} and \cite{salberger}.
	(In fact, Theorem \ref{thm:side} corrects a mistake in \cite[Lemma~1]{tb}, in which it is claimed that  $f_j$ has degree $O_\ve(1)$ for all $j\leq J$.) This in turn could be used to recover the bound 
	\eqref{eq:s1} of Salberger when $B_1=B_2=B_3=B$.

	In the case $B_1=B_2=B_3=B$ we can relax the condition 
	$\gcd(q,g(0,0))=1$ somewhat. 
	In this case  \eqref{eq:defK'} becomes
	\begin{equation}\label{eq:B1B'}
		K=B^{\frac{1}{\sqrt{m_1+m_2+m_3}}}q^{-\frac{m_1}{2(m_1+m_2+m_3)^{3/2}\deg g}},
	\end{equation}
	for any $\uu{m}\in \ZZ_{\geq 0}^3$  that is a maximum (among the exponents of $f$) for a fixed linear total order. We shall also prove the following result
	in Section \ref{s:main}.

	\begin{theorem}\label{thm:side'}
		Let $\ve>0$ and $q\in \NN$. 
		Let $f\in \ZZ[x_1,x_2,x_3]$ be absolutely irreducible and let  $g\in \ZZ[x_2,x_3]$.
		Let 
		$K$ be given by  \eqref{eq:B1B'} and 
		assume that 
		\begin{equation}\label{eq:gcdq}
			\gcd(q,g(0,0),g_0(1,0),g_0(0,1))=1,
		\end{equation}
		where $g_0$ is the top degree part of $g$. 
		Then there exist 
		$f_1,\ldots, f_J\in \ZZ[x_1,x_2,x_3],$ 
		and a finite collection of points $Z\subset V(f)$, such that the following
		hold: 
		\begin{enumerate}
			\item $J=O_{\ve}(\max\{1,KB^\ve\})$;
			\item
			each $f_j$ is coprime to $f$ and has degree 
			$O_{\ve}(1)$, for $j\leq J$; 
			\item
			$\#Z=O_{\ve}(\max\{1,KB^\ve\}^{2})$; and 
			\item
			for each $\uu{x}\in S_q(f,g;B)\setminus Z$, 
			there exists $j\leq J$ such that
			$$
			f(x_1,x_2,x_3)=f_j(x_1,x_2,x_3)=0.
			$$
		\end{enumerate}
	\end{theorem}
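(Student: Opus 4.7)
The strategy is to reduce Theorem~\ref{thm:side'} to Theorem~\ref{thm:side} by translating in the $(x_2,x_3)$-plane so as to arrange $\gcd(q,\tilde g(0,0))=1$ for a suitably translated $\tilde g$, with a direct argument handling the remaining large-$q$ regime. Under hypothesis~\eqref{eq:gcdq}, for each prime $p\mid q$ at least one of $g(0,0)$, $g_0(1,0)$, $g_0(0,1)$ is a unit modulo $p$: the first case admits the trivial local shift $(0,0)$, while in the second (respectively third) case the one-variable polynomial $g(t,0)\in\ZZ[t]$ (respectively $g(0,t)$) is nonzero modulo $p$ with unit leading coefficient, so some $t\in\{0,1,\ldots,\deg g\}$ avoids being a root. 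Combining these per-prime shifts $(a_p,b_p)\in\{0,\ldots,\deg g\}^2$ by CRT produces $(a,b)\in[0,q)^2$ with $g(a,b)$ coprime to $q$.

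\textbf{Translate and apply Theorem~\ref{thm:side}.} Setting $\tilde f(\y)=f(y_1,y_2+a,y_3+b)$ and $\tilde g(y_2,y_3)=g(y_2+a,y_3+b)$, the polynomial $\tilde f$ is absolutely irreducible, $\tilde g(0,0)=g(a,b)$ is coprime to $q$, and $\x=(y_1,y_2+a,y_3+b)$ embeds $S_q(f,g;B)$ into $S_q(\tilde f,\tilde g;\uu{B}')$ with $\uu{B}'=(B,B+q,B+q)$. Theorem~\ref{thm:side} applied to this shifted data returns polynomials $\tilde f_j$ and a finite set $\tilde Z$, which pull back along the translation to the required $f_j$ and $Z$. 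When $q\leq B$, we have $\log(B+q)=\log B+O(1)$, so substituting $\uu{B}'$ into~\eqref{eq:defK'} reproduces~\eqref{eq:B1B'} up to a factor $B^{O(\ve)}$ absorbed into $\max\{1,KB^\ve\}$; meanwhile the degree bound $O_\ve(\log B/\log B')$ collapses to $O_\ve(1)$. For $q\geq c_g B^{\deg g}$, the trivial estimate $|g(x_2,x_3)|\ll B^{\deg g}$ forces any congruence solution to satisfy $g(\x)=0$, and taking $f_1=g$ alone suffices (with $g$ coprime to the absolutely irreducible $f$; the degenerate subcase $f\in\ZZ[x_2,x_3]$ is treated directly via Bezout on the plane curve $f=g=0$).

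\textbf{Main obstacle.} The principal difficulty lies in the intermediate regime $B<q\lesssim B^{\deg g}$, where the CRT shift may have magnitude exceeding $B$ and so the translated box $(B,B+q,B+q)$ is genuinely larger than the original cube. Verifying that applying Theorem~\ref{thm:side} to this enlarged box still delivers the sharp exponent in~\eqref{eq:B1B'}---in particular that the $m_1$-weighted correction involving $\log q$ in~\eqref{eq:defK'} matches up correctly with the altered $T_{\uu{m}}$ and $S$---requires careful bookkeeping, and constitutes the main technical step of the argument.
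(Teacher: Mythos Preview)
Your reduction-by-translation strategy is genuinely different from the paper's approach, and it does not go through in the intermediate regime you flag. The paper never translates; instead it works inside the determinant method prime by prime. For each prime power $p^j\| q$, hypothesis~\eqref{eq:gcdq} guarantees that one of the monomials $\uu{t}\in\{(0,0,0),(0,l,0),(0,0,l)\}$ of $g$ (with $l=\deg g$) has coefficient coprime to $p$, and Lemma~\ref{lem:mue} then gives $p^{j\lambda}\mid\Delta$. The point of the equal-box hypothesis $B_1=B_2=B_3=B$ is that it forces $S=l\log B=\log\uu{B}^{(0,l,0)}=\log\uu{B}^{(0,0,l)}$, so Lemma~\ref{lem:ineqlambda2} yields the \emph{same} value of $\lambda$ whichever of the three monomials is used. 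Multiplying over the prime powers of $q$ recovers $q^\lambda\mid\Delta$ exactly as in Lemma~\ref{lem:inD'}, and the rest of the argument is identical to that of Theorem~\ref{thm:side}.

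Your translation argument cannot be repaired by bookkeeping. Take for concreteness $\uu{m}=(m_1,0,0)$ (so $|\uu{m}|=m_1$) and set $C=B+q$. With the shifted box $(B,C,C)$ one has $\log T_{\uu{m}}=m_1\log B$ and $S=(\deg g)\log C$, and plugging into~\eqref{eq:defK'} gives
\[
K_{\mathrm{trans}}=C^{1/\sqrt{m_1}}\,q^{-1/(2\sqrt{m_1}\deg g)},
\]
whereas the target from~\eqref{eq:B1B'} is $K=B^{1/\sqrt{m_1}}\,q^{-1/(2\sqrt{m_1}\deg g)}$. Thus $K_{\mathrm{trans}}/K=(C/B)^{1/\sqrt{m_1}}$, which for $q$ near $B^{\deg g}$ is of size $B^{(\deg g-1)/\sqrt{m_1}}$, not $B^{O(\ve)}$. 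So Theorem~\ref{thm:side} applied to the enlarged box produces too many auxiliary polynomials and does not deliver the exponent claimed in Theorem~\ref{thm:side'}. Attempting to avoid a large CRT shift by finding a single $(a,b)$ of bounded size with $\gcd(g(a,b),q)=1$ also fails in general: for a given bound $M$, one can choose $q$ divisible by a prime factor of each of the finitely many integers $g(a,b)$ with $0\le a,b\le M$. (A secondary issue: your threshold $q\ge c_g B^{\deg g}$ in the large-$q$ case has $c_g$ depending on the coefficients of $g$, whereas the implied constants in the theorem may depend only on $\deg f$ and $\deg g$.)
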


	Salberger's version of the determinant method has been further refined to  capture the dependence of the error term on the degree $d$ of the polynomial. Thus it follows from work of 
	Binyamin et al \cite{bin} that a quadratic dependence is possible. 
	While we believe that this ought to give an alternative route to our main results, it would require a reworking of the proof and so we choose not to  pursue this direction here.
	
	\medskip
	
	We now collect two applications of our bound for surfaces with a side condition. The first concerns a uniform upper bound for the
	affine quadric surface $S\subset \AAA^3$ defined by $a_1x_1^2+a_2x_2^2+a_3x_3^2=n$,
	for suitable  coefficients $a_1,a_2,a_3,n\in \ZZ$.
	Let
	\begin{equation}\label{eq:N1}
		N_1(B)=\#\{\uu{x}\in S(\ZZ): |\uu{x}|\leq B\},
	\end{equation}
	where
	$	|\uu{x}|=\max\{|x_1|,|x_2|,|x_3|\}$ is the sup-norm. 
	In some applications it is useful to have a bound for $N_1(B)$ that gets better when $|\uu{a}|$ gets larger. When $n=0$ and the integer solutions are further restricted to be primitive, such a bound is provided in \cite[Cor.~2]{n-2} using arguments from the geometry of numbers. 
	We are interested here in the case $n\neq 0$.  In this case a divisor function bound (in the form \cite[Thm.~1.11]{bg}, for example)  readily yields
	\begin{equation}\label{eq:trivial}
		N_1(B)\ll_\ve B^{1+\ve},
	\end{equation}
	for any $\ve>0$, where the implied constant only depends on $\ve>0$. This bound is optimal up to 
	the factor $B^\ve$,
	since $S$ can contain rational lines
	if  $-a_1a_2a_3n= \square$.
	We are interested in obtaining bounds for $N_1(B)$ that improve on this when 
	$|\uu{a}|$ is  large and 
	$-a_1a_2a_3n\neq \square$.
	We shall build on the proof of Theorem~\ref{thm:side'} to prove the following estimate in Section \ref{s:quadric}, which improves on the divisor  bound as soon as  $|\uu{a}|> B^{1+\delta}$, for any fixed $\delta>0$.

	\begin{corollary}\label{thm:squares}
		Let $a_1,a_2,a_3,n\in\ZZ$, with $-a_1a_2a_3n\neq \square$
		and $\gcd(a_1,a_2,a_3,n)=1$. Then, for all $\ve>0$, we have 
		$$
		N_1(B)\ll_\ve\frac{B^{\frac{7}{6}+\ve}}{|\uu{a}|^{1/6}}+B^{\frac{1}{2}+\ve}.
		$$
	\end{corollary}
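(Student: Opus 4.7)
The plan is to apply Theorem~\ref{thm:side'}, or more precisely to revisit its proof, with the quadric $f(\uu{x}) = a_1 x_1^2 + a_2 x_2^2 + a_3 x_3^2 - n$. After permuting indices we may assume $|a_1|=|\uu{a}|$. On $V(f)$ we have $a_2 x_2^2 + a_3 x_3^2 - n = -a_1 x_1^2$, so with $g(x_2,x_3) = a_2 x_2^2 + a_3 x_3^2 - n$ and $q = |a_1|$ every integer zero of $f$ lies in $S_q(f,g;B)$. The hypothesis $-a_1 a_2 a_3 n \neq \square$ forces $a_1 a_2 a_3 n \neq 0$, so the discriminant of the associated homogenised quadratic form is non-zero and $f$ is absolutely irreducible; the condition $\gcd(q, g(0,0), g_0(1,0), g_0(0,1)) = \gcd(a_1, a_2, a_3, n)=1$ holds by assumption, verifying~\eqref{eq:gcdq}.

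Taking the lexicographic order with $x_1 > x_2 > x_3$, so that $\uu{m} = (2,0,0)$ is the leading exponent, we rerun the determinant method from the proof of Theorem~\ref{thm:side'}, tailoring the auxiliary-polynomial degree to the quadric setting. The goal is to produce $J = O_\varepsilon(B^{2/3+\varepsilon}/|\uu{a}|^{1/6})$ polynomials $f_j$ of bounded degree, coprime to $f$, together with an exceptional set $Z$ with $\#Z = O_\varepsilon(B^{4/3+\varepsilon}/|\uu{a}|^{1/3})$, so that every integer point of $V(f)$ with sup-norm at most $B$ lies in $Z$ or on some intersection curve $C_j = V(f) \cap V(f_j)$.

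To control the contribution of each $C_j$, we use that $-a_1 a_2 a_3 n \neq \square$ means $V(f)$ contains no $\QQ$-rational line, so no component of $C_j$ is a line; the geometrically irreducible components all have degree $\geq 2$, and Heath-Brown's bound for integer points on curves in $\PP^3$ gives $O_\varepsilon(B^{1/2+\varepsilon})$ points per degree-$\delta$ component when $\delta\geq 4$. Any conic components arising are handled separately by a divisor-type bound on the resulting plane section of the quadric; the combined contribution is absorbed into the $O(B^{1/2+\varepsilon})$ term of the corollary (which also accommodates the trivial count of integer solutions in the regime where $|\uu{a}|$ is so large that the congruence mod $|a_1|$ forces $g(x_2,x_3)=0$). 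Together this gives
$$
N_1(B) \ll_\varepsilon \#Z + J \cdot B^{1/2+\varepsilon} + B^{1/2+\varepsilon} \ll_\varepsilon \frac{B^{4/3+\varepsilon}}{|\uu{a}|^{1/3}} + \frac{B^{7/6+\varepsilon}}{|\uu{a}|^{1/6}} + B^{1/2+\varepsilon},
$$
as required.

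The main obstacle is the second paragraph: the off-the-shelf formula~\eqref{eq:B1B'} delivers only $K = B^{1/\sqrt{2}}|\uu{a}|^{-1/(4\sqrt{2})}$, which is too weak for small $|\uu{a}|$, whereas what is needed is $K = B^{2/3}/|\uu{a}|^{1/6}$. Extracting this requires one to go inside the proof of Theorem~\ref{thm:side'} and optimise the auxiliary-polynomial degree for a quadric with a congruence side condition, a choice that is suboptimal for a generic surface but sharper here.
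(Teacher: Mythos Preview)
Your setup is correct: the choice of $f$, $g$, $q=|a_1|$, the verification of \eqref{eq:gcdq}, the leading exponent $\uu{m}=(2,0,0)$, and the observation that the generic bound \eqref{eq:B1B'} only gives $K=B^{1/\sqrt{2}}|\uu{a}|^{-1/(4\sqrt{2})}$ all match the paper. The final assembly and the curve-counting (via Bombieri--Pila, since the hypothesis rules out rational lines) are also essentially right, though the paper simply invokes \cite[Thm.~5]{BP} for every irreducible component of degree $\geq 2$ without your detour through $\delta\geq 4$ and a separate conic argument.

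The real gap is in your second paragraph. You correctly state the target $K'=B^{2/3}|\uu{a}|^{-1/6}$ and acknowledge that one must ``go inside the proof'', but you do not identify the mechanism that produces the improvement, and your phrase ``optimise the auxiliary-polynomial degree'' is not it. The exponent set $\cE(Y)$ and the $q$-divisibility exponent $\lambda$ are unchanged (indeed $\lambda\sim E^{3/2}/6$ in both the generic and the quadric calculation). What changes is the $p$-adic divisibility at the \emph{good} primes: for a non-singular quadric the constant $2\sqrt{2}/3$ in Lemmas~\ref{lem:p} and~\ref{lem:otherprimes} can be replaced by $1$, using Salberger's sharper estimate for surfaces of degree $2$ or $3$ \cite[remark after Lemma~16.11]{salb-cubic}. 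This is the content of the paper's Lemmas~\ref{lem:pv2} and~\ref{lem:otherprimesv2}, and it is precisely this input that shifts the balance from $K=B^{1/\sqrt{2}}q^{-1/(4\sqrt{2})}$ to $K'=B^{2/3}q^{-1/6}$. Without naming this ingredient your proposal is a statement of what is needed rather than a proof.

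A second, smaller omission: the determinant method requires $\log q\ll\log B$ and $\log\pi_S\ll\log B$. The paper disposes of the range $\max\{|n|,|\uu{a}|\}\gg B^{20}$ at the outset via \cite[Lemma~5]{pila'}, which furnishes a single auxiliary quadric and yields $N_1(B)\ll_\ve B^{1/2+\ve}$ directly. Your parenthetical about the congruence forcing $g(x_2,x_3)=0$ gestures at this but does not supply the argument.
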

	
	A  direct application of Theorem \ref{thm:side'} would actually lead to the weaker bound 
	$$
	N_1(B)\ll_\ve 
	|\uu{a}|^{-1/(2\sqrt{2})}
	B^{\sqrt{2}+\ve}
	+
	|\uu{a}|^{-1/(4\sqrt{2})}
	B^{\frac{1}{\sqrt{2}}+\frac{1}{2}+\ve}
	+B^{\frac{1}{2}+\ve}.
	$$
	In order to arrive at the statement of 
	Corollary \ref{thm:squares}, we shall exploit earlier work of Salberger \cite{salb-cubic} that is  more efficient for surfaces of degree $2$ or $3$. 
	Note that a bound for $N_1(B)$ can also be extracted from work of Ellenberg and Venkatesh \cite[Prop.~1 and Rem.~2]{EV}, which uses the determinant method to get better upper bounds when the underlying polynomial has large coefficients. When specialised to our setting (with $d=2$ and $n=2$), their work 
	shows that the set of 
	$\uu{x}\in S(\ZZ)$ with $ |\uu{x}|\leq B$ are covered by $O_\ve(
	|\uu{a}|^{-1/(2\sqrt{2})}B^{\sqrt{2}+\ve}+B^\ve)$ curves of degree $O_\ve(1)$.
	This was proved before Salberger's work \cite{salb-cubic, salberger} emerged and  leads to a strictly weaker bound for $N_1(B)$.
	
	\begin{remark}
		It is interesting to think about possible lower bounds for $N_1(B)$. 
		Suppose that $a_1=a_2=1$ and $a_3=-c$, for some $c\in \NN$. On assuming that 
		$1\leq n\leq B^2/2$, we can focusing attention on solutions with $1\leq x_3\leq B /\sqrt{2c}$ in order to obtain the lower bound
		$$
		N_1(B)\geq \sum_{x\leq X} r(n+cx^2),
		$$
		where 
		$X=B /\sqrt{2c}$ and 
		$r(m)$ is the function that counts the number of representations of $m\in \NN$ as a sum of two squares.  By adapting work of Hooley \cite{hooley63} to the $r$-function, it should be possible to prove an asymptotic formula for the right hand side, as $X\to \infty$, although  one would need to 
		take care that everything is made sufficiently uniform in $n$ and $c$. In this way we expect that it is possible to prove a lower bound of the shape $N_1(B)\gg_\ve B/ c^{1/2+\ve}$, for any $\ve>0$, at
		least if $c$ is not too large in terms of $B$. 		The authors are very grateful to the referee for this remark. 
	\end{remark}
	
	\medskip
	
	We would also like to demonstrate that our work can be used to tackle
	higher-dimensional counting problems. In Theorem 
	\ref{thm:bound} we present a general upper bound for a class of threefolds
	in		$\mathbb{A}^4$, which we then apply to the affine equation
	$$
	x_1^k+x_2^{l}+x_3^{m}+x_4^k=N,
	$$
	for  non-zero $N\in \ZZ$ and suitable integer exponents $k,l,m\geq 2$. 
	Let 	
	$
	N_2(B)$ denote the number of 
	solutions 
	$\uu{x}\in \ZZ^4$ to this equation with  $|\uu{x}|\leq B$.
	Combining divisor bounds with uniform bounds for Thue equations, one can easily show 
	that 
	$$N_2(B)=O_\ve(B^{2+\ve}|N|^\ve).
	$$ The problem of improving this ``trivial bound''
	has a rich history, but most work has  focused on counting  
	representations of a large positive integer $N$  by  $\uu{x}\in \ZZ_{\geq 0}^4$, in which case it is natural to take $B=N^{1/k}$, when $k=l=m$.
	Let $N_{2}^+(B)$ denote this counting function. 
	Then an old (and still unbeaten) result of 
	Hooley \cite{hooley} provides the upper bound 
	$N_2^+(B)=O_\ve(B^{2-1/6+\ve})$
	in the case $k=l=m=3$. Hooley used sieve methods, an approach that Wisdom 
	extended to handle  
	the case $k=l=m=5$ in
	\cite{wisdom'}, and the cases 
	$k=l=3$ and $m\in \{4,5\}$  in \cite{wisdom}.
	More recently, Salberger's version of the determinant method has  also been applied to this problem by
	Marmon \cite{marmon}, with the outcome 
	that 
	$
	N_2^+(B)\ll_\ve B^{1+2/\sqrt{k}+\ve},
	$
	when $k=l=m$. 
	(Marmon gets better bounds when the implied constant is allowed to depend additionally on $N$, but our focus in this paper is on uniform bounds.)
	We shall deduce the following result from Theorem \ref{thm:side'} in Section \ref{s:unlike}.

	\begin{corollary}\label{thm:kl}
		Let $N\in \ZZ_{\neq 0}$ and let $k,l,m$ be integers such that $k\geq 13$ is odd and $k>l> m\geq 2$.
		Then, 
		for all $\ve>0$, we have 
		$$N_2(B)\ll_\ve B^{\frac{4}{3}+\frac{1}{\sqrt{k-1}}(1-\frac{1}{2l})+\ve}+B^{1+\frac{2}{\sqrt{k-1}}(1-\frac{1}{2l})+\ve}.
		$$
	\end{corollary}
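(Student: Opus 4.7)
The strategy is to exploit the factorisation
\begin{equation*}
x_1^k + x_4^k = (x_1 + x_4)\,F(x_1, x_4),
\end{equation*}
valid since $k$ is odd, where $F \in \ZZ[y_1, y_2]$ is the homogeneous polynomial of degree $k - 1$ satisfying this identity. Putting $s = x_1 + x_4 \in \{-2B, \dots, 2B\}$ and substituting $x_4 = s - x_1$ turns the original equation into
\begin{equation*}
f_s(x_1, x_2, x_3) := s\,F(x_1, s - x_1) + x_2^l + x_3^m - N = 0,
\end{equation*}
which automatically forces the congruence $g(x_2, x_3) := x_2^l + x_3^m - N \equiv 0 \pmod{|s|}$. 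The degenerate case $s = 0$ gives $x_4 = -x_1$ and $x_2^l + x_3^m = N$; Bombieri--Pila on this plane curve of degree $l \geq 3$ yields $O_\ve(B^{1/l + \ve})$ admissible $(x_2, x_3)$, contributing $O_\ve(B^{1 + 1/l + \ve}) \ll B^{4/3 + \ve}$, which is absorbed into the main bound.

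For $s \neq 0$ the plan is to apply Theorem~\ref{thm:side'} to $f_s$ with $g$ as above, $q = |s|$, and the lexicographic order $x_1 \succ x_2 \succ x_3$. Three hypotheses need checking. First, the leading monomial of $f_s$ is $x_1^{k-1}$ with coefficient $sk \neq 0$, since each summand in
\begin{equation*}
F(x_1, s - x_1) = \sum_{j=0}^{k-1}(-1)^j\,x_1^{k-1-j}(s - x_1)^j
\end{equation*}
contributes $+1$ to the $x_1^{k-1}$-coefficient; hence $\uu{m} = (k-1, 0, 0)$, and \eqref{eq:B1B'} gives
\begin{equation*}
K = B^{\frac{1}{\sqrt{k-1}}}\,|s|^{-\frac{1}{2\sqrt{k-1}\,l}}.
\end{equation*}
Second, the gcd hypothesis \eqref{eq:gcdq} is trivially satisfied, since the top-degree part $g_0 = x_2^l$ (using $l > m$) yields $g_0(1,0) = 1$. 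Third, $f_s$ has the separated form $p(x_1) + h(x_2, x_3)$ with $p$ of degree $k - 1$ and $h = x_2^l + x_3^m - N$ of degree $l$; standard arguments for polynomials in separated variables imply absolute irreducibility for every $s \neq 0$, with at worst a finite set of exceptional $s$ handled trivially.

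Theorem~\ref{thm:side'} then produces, for each $s$, at most $J = O_\ve(K B^\ve)$ auxiliary polynomials $f_{s,j}$ of degree $O_\ve(1)$ coprime to $f_s$, and an exceptional set $Z_s \subset V(f_s)$ of size $O_\ve(K^2 B^\ve)$. The remaining solutions lie on curves $C_{s,j} = V(f_s) \cap V(f_{s,j})$ of degree $O_\ve(k - 1)$; applying a uniform curve bound (projecting to a plane and invoking Bombieri--Pila on a resulting plane curve of degree at least $3$) yields $O_\ve(B^{1/3 + \ve})$ integer points per curve. Summing over $s$ and using $\sum_{|s| \leq 2B} |s|^{-\alpha} \ll B^{1 - \alpha}$ for $\alpha < 1$ gives
\begin{align*}
N_2(B)
&\ll_\ve B^{\frac{4}{3} + \ve} + \sum_{0 < |s| \leq 2B} \bigl(K B^\ve \cdot B^{\frac{1}{3} + \ve} + K^2 B^\ve\bigr) \\
&\ll_\ve B^{\frac{4}{3} + \frac{1}{\sqrt{k-1}}(1 - \frac{1}{2l}) + \ve} + B^{1 + \frac{2}{\sqrt{k-1}}(1 - \frac{1}{2l}) + \ve}.
\end{align*}
The main obstacle will be the uniform $O_\ve(B^{1/3 + \ve})$ per-curve estimate: line components of $C_{s,j}$ would a priori carry $\Omega(B)$ integer points each, so one must either absorb them into $Z_s$ by exploiting that the congruence $x_2^l + x_3^m \equiv N \pmod{|s|}$ restricts $(x_2, x_3)$ along any given line to a very sparse set, or control the number of lines on $V(f_s)$ by a separate geometric argument uniform in $s$.
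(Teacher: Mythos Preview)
Your overall strategy coincides with the paper's: set $s=x_1+x_4$, use the factorisation of $x_1^k+x_4^k$ for odd $k$, and for each $s\neq 0$ apply Theorem~\ref{thm:side'} to the resulting surface with the side congruence $g(x_2,x_3)\equiv 0\bmod{|s|}$, leading to exactly the value of $K$ you write down and hence to the two displayed exponents after summing over $s$. Your checks of the leading monomial, the gcd hypothesis~\eqref{eq:gcdq}, and the absolute irreducibility of $f_s$ for $s\neq 0$ are all fine (in fact there are no exceptional $s$: since $x_2^l+x_3^m-(N-p_s(x_1))$ is absolutely irreducible over $\bar\QQ(x_1)$ by Eisenstein in $x_2$, the surface is geometrically integral for every $s\neq 0$).

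The genuine gap is the one you flag yourself: the contribution from line and conic components of the auxiliary curves $C_{s,j}$. Your first proposed fix cannot work, because the congruence $x_2^l+x_3^m\equiv N\bmod{|s|}$ is \emph{automatic} for every integer point of $V(f_s)$ (it is how the side condition arose in the first place), so it imposes no further restriction along a line in $V(f_s)$. Your second suggestion is the right instinct but needs to be carried out at the level of the threefold $X\subset\AAA^4$ rather than surface by surface. The paper observes that a rational line or conic in $V(f_s)$ lifts, via $x_4=s-x_1$, to a rational line or conic in $X$, so it suffices to control all rational lines and conics on $X$ at once. This is done by a Wronskian argument in the spirit of Newman--Slater: if $(\gamma_1(t),\dots,\gamma_4(t))$ parametrises such a curve with $\deg\gamma_i\le 2$, the linear independence of the $\gamma_i^{l_i}$ forces degree inequalities that are violated once $k\ge 13$, unless the curve lies in one of the explicit loci
\[
V(x_1^k+x_4^k)\cup V(x_2^l+x_3^m)\cup V(x_1^k+x_2^l+x_4^k)\cup V(x_1^k+x_3^m+x_4^k).
\]
Each of these intersections with $X$ contributes only $O_\ve(B^{4/3+\ve})$ integer points (e.g.\ on $x_1^k+x_4^k=0$ one has $x_4=-x_1$ and then $x_2^l+x_3^m=N$, giving $O_\ve(B\cdot B^{1/l+\ve})$). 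This is precisely where the hypothesis $k\ge 13$ is used, and without this step your argument is incomplete.
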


	We shall see that the restriction $k\geq 13$ 
	allows us  to control the contribution from lines and conics in the hypersurface through work of  Newman and Slater \cite{waring}. The assumption that $k$ is odd allows us to factorise 
	$x_1^k+x_4^k$ over $\QQ$, which is a crucial first step in the proof.

	The determinant method approach of Marmon \cite{marmon} could also be used to give an upper bound for $N_2(B)$.   His approach is based on fixing a variable $x_3$, say, and then applying the 
	argument behind \eqref{eq:s1} to estimate the number of points that contribute to $N_2(B)$ on the resulting affine surface. 
	When carried through this  would lead to an alternative bound
	$$N_2(B)\ll_\ve B^{\frac 43+\frac{1}{\sqrt{k}}+\ve}+B^{1+\frac{2}{\sqrt{k}}+\ve},
	$$
	for $k\geq l\geq m\geq 2$, with $k\geq 13$. 
	While this is more efficient for  $k= l$, 
	Corollary \ref{thm:kl}  is  stronger for  $k>l$, since then  	
	$
	\frac{1}{\sqrt{k-1}}\left(1-\frac{1}{2l}\right)< \frac{1}{\sqrt{k}}.
	$

	\section{Salberger's global determinant method with a side condition}\label{s:main}

	We begin  by recalling the notation that was introduced in the  introduction. 
	Let $f\in \ZZ[x_1,x_2,x_3]$ be absolutely irreducible, let  $g\in \ZZ[x_2,x_3]$ and let $B_1,B_2,B_3\geq 2$.
	Let $\uu{m}\in \ZZ_{\geq 0}^3\setminus \{\uu{0}\}$ be the maximum among exponents appearing in $f$ for a fixed linear total order.
	Then we put $T_{\uu{m}}=\uu{B}^{\uu{m}}$ and we define $S=\max_{\uu{e}}\{\log \uu{B}^{\uu{e}}\}$, where the maximum runs over exponents $\uu{e}\in \ZZ_{\geq 0}^3$ such that $\uu{x}^{\uu{e}}$ appears in $g$ with a non-zero coefficient.  Let $B$ and $B'$ be given  by \eqref{eq:min-max}. 
	Let $\ve>0$.  
	It will be convenient to define 
	\begin{equation}\label{eq:defK}
		K_\ve=\exp\left(
		\sqrt{\frac{\prod_i \log B_i}{\log T_{\uu{m}}}}\left(1-\frac{m_1\log B_1}{2S\log T_{\uu{m}}}\log q\right)+\ve\log B\right),
	\end{equation} 
	so that $K_\ve=KB^\ve$ in the notation of  \eqref{eq:defK'}.

	Our proof of Theorems \ref{thm:side} and \ref{thm:side'} will be achieved by blending arguments of Heath-Brown \cite{cime} with the relevant arguments  of Salberger \cite{salberger}. 
	A singular point of $f (\uu{x}) = 0$ satisfies $\frac{\partial f}{\partial x_i}(\uu{x})=0$ for $1\leq i\leq 3$.
	Not all of these derivatives can vanish  identically on the zero locus of $f$, since $f$ is 
	absolutely irreducible. 
	Thus
	we can account 
	for all singular points if we include a non-zero partial derivative among the auxiliary polynomials 
	$f_1,\dots,f_J$.

	We proceed
	by introducing the exponent set that will play a  critical role in our argument. For $Y\geq 2$, let 
	$$
	\cE(Y)=\{\uu{e}\in \ZZ_{\geq 0}^3: \log \uu{B}^{\uu{e}}\leq Y, e_i<m_i \text{ for at least one } i\}
	$$
	and put	
	\begin{equation}\label{eq:pen}
		\cE^1(Y)=\{\uu{e}\in \cE(Y): e_1<m_1\}.
	\end{equation}
	Let 
	$h(\uu{x})=\sum_{\uu{e}\in\cE(Y)}a_{\uu{e}}\uu{x}^{\uu{e}}$,  for any coefficients $a_{\uu{e}}\in \ZZ$. We  proceed to show that  $h$ must be  
	coprime to $f$. Indeed, suppose for a contradiction that $h=fk$, for some further polynomial
	$k\in \bar\QQ[\uu{x}]$. 
	Let $\uu{k}$ be the maximum exponent (for the fixed linear total order) that appears in $k$ with non-zero coefficient. 
	Note that if $\uu{m'}\prec \uu{m}$ and $\uu{k'}\prec \uu{k}$ then $\uu{m'}+\uu{k'}\prec \uu{m}+\uu{k}$.
	Hence it follows that the exponent $\uu{m}+\uu{k}$ can be written in a unique way as the sum of an exponent in $f$ and an exponent in $k$,  since $\uu{m}$ and $\uu{k}$ are the maxima for the linear total order. Thus 
	$\uu{m}+\uu{k}$ must appear as an exponent in $h$ with  a non-zero coefficient, whence 
	$\uu{m}+\uu{k}\in \cE(Y)$. But $m_i+k_i\geq m_i$ for each $1\leq i\leq 3$, which  contradicts the 
	definition of $\cE(Y)$. Thus we cannot have that $h$ is a multiple of $f$ over $\bar \QQ$.

	We let $E=\#\cE(Y)$.
	For each $Z\geq \log T_{\uu{m}}$, 
	it follows from \cite[Eqs.~(5.9) and (5.10)]{cime} that 
	there exists $Y\in [Z,2Z]$ such that
	\begin{equation}\label{eq:defE}
		E=\frac{\log T_{\uu{m}}}{\prod_i \log B_i}\frac{Y^2}{2}\left(1+O\left(\sqrt{\frac{\log T_{\uu{m}}}{Y}}\right)\right)
	\end{equation}
	and 
	\begin{equation}\label{eq:BE}
		\sum_{\uu{e}\in \cE(Y)}\log \uu{B}^{\uu{e}}
		=\frac{\log T_{\uu{m}}}{\prod_i \log B_i}\frac{Y^3}{3}\left(1+O\left(\sqrt{\frac{\log T_{\uu{m}}}{Y}}\right)\right).
	\end{equation}
	We fix  such a $Y$ with $Y\gg \log B$, for a sufficiently large  implied constant that depends only on the degrees of $f$ and $g$. In particular,  we must then have $Y\gg \log T_{\uu{m}}$, since 
	$T_{\uu{m}}\leq B^{\deg f}$.
	
	Let $q\in \NN$.
	We henceforth write $X\subset \AAA^3$ for the geometrically irreducible surface defined by $f=0$. 
	We redefine $S_q(f,g;\uu{B})$ to focus on non-singular points of $X$, so that  $\nabla f(\uu{x})\neq \uu{0}$ for any $\uu{x}\in S_q(f,g;\uu{B})$.
	Let $r_1<\dots< r_u$ be a sequence of primes that are coprime with $q$ and such that the reduction modulo $r_i$ of $X$ is geometrically irreducible. 
	Let 
	\begin{equation}\label{eq:def-r}
		r=
		\begin{cases} 
			r_1\cdots r_u & \text{if $u\geq 1$,}\\
			1 & \text{if $u=0$.}
		\end{cases}
	\end{equation}
	For each $i\leq u$, let $\uu{\eta}_i\in X(\F_{r_i})$ be a non-singular point of the reduction of $X$ modulo $r_i$. Let $\uu{\eta}=(\uu{\eta}_1,\dots, \uu{\eta}_u)$ and put
	\begin{equation}\label{eq:srt}
		S_{r,\uu{\eta}}=\left\{
		\uu{x}\in X(\ZZ): 
		\begin{array}{l}
			\uu{x}\equiv \uu{\eta}_i\bmod{r_i} ~\forall i\leq u\\
			|x_1|\leq B_1, |x_2|\leq B_2, |x_3|\leq B_3\\
			g(\uu{x})\equiv 0\bmod q
		\end{array}
		\right\}.
	\end{equation}
	Let $J=\#S_{r,\uu{\eta}}$ and let 
	$\uu{x}_1,\dots,\uu{x}_J$ be the elements of $S_{r,\uu{\eta}}$.  
	We may then  form the $J\times E$ matrix 
	\begin{equation}\label{eq:MM}
		\mathbf{M}=\left(\uu{x}_j^{\uu{e}}\right)_{
			\substack{\uu{e}\in \mathcal{E}(Y)\\
				1\leq j\leq J}},
	\end{equation}
	with the goal being to prove that 
	$
	\rank \mathbf{M}<E
	$
	under suitable choices for $r$ and $Y$.
	To achieve this, we may clearly  proceed under the assumption that $J\geq E$ and, on 
	relabelling, we may work with 
	the principal  
	$E\times E$ 
	minor
	\begin{equation}\label{eq:defDelta}
		\Delta=\abs{\det 
			\left(\uu{x}_j^{\uu{e}}\right)_{
				\substack{\uu{e}\in \mathcal{E}(Y)\\
					1\leq j\leq E}}}.
	\end{equation}
	Our goal is now to show that $\Delta=0$, which we shall achieve by comparing upper bounds for $\Delta$ with information about the $p$-adic valuation of $\Delta$
	for many primes $p$.
	
	Note that 
	$|\uu{x}_j^{\uu{e}}|\leq B_1^{e_1}B_2^{e_2}B_3^{e_3}$,  for each $1\leq j\leq J$. Hence it follows from 
	\eqref{eq:defE} and 
	\eqref{eq:BE} that 
	\begin{equation}\label{eq:delta}
		\log \Delta\leq \log E^E+
		\sum_{\uu{e}\in \cE(Y)}\log \uu{B}^{\uu{e}}
		=\frac{\log T_{\uu{m}}}{\prod_i \log B_i}\frac{Y^3}{3}\left(1+O\left(\sqrt{\frac{\log T_{\uu{m}}}{Y}}\right)\right).
	\end{equation}
	Before turning to the $p$-adic valuation of $\Delta$, it will be convenient to record the estimate
	\begin{equation}\label{eq:E32}
		Y^3=2\sqrt{2}E^{3/2}\left(\frac{\prod_i\log B_i}{\log T_{\uu{m}}}\right)^{3/2}\left(1+O\left(\sqrt{\frac{\log T_{\uu{m}}}{Y}}\right)\right),
	\end{equation}
	which easily follows from \eqref{eq:defE} and the assumption that $Y\gg \log T_{\uu{m}}$, for a sufficiently large implied constant.

	We are now ready to collect together estimates for the divisibility of $\Delta$ by various prime powers. 
	Let $r_1<\cdots<r_u$ be the primes that were chosen above and recall the definition 
	\eqref{eq:def-r} of $r$. 
	The following result is due to Heath-Brown  \cite[Eq.~(5.11)]{cime}, combined with 
	the estimate in \eqref{eq:E32} for $Y^3$.

	\begin{lemma}\label{lem:p}
		There exists $\nu\in \NN$ such that $r^\nu\mid \Delta$, with 
		$$
		\nu=\frac{2\sqrt{2}E^{\frac 32}}{3}\left(1+O\left(\sqrt{\frac{\log T_{\uu{m}}}{Y}}\right)\right).
		$$
	\end{lemma}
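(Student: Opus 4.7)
The plan is to invoke Heath-Brown's local-analytic divisibility bound \cite[Eq.~(5.11)]{cime} at each auxiliary prime $r_i$ separately, and then to combine the contributions and repackage the answer via the asymptotic \eqref{eq:E32}. Fix $i\le u$ and consider the $r_i$-adic valuation of $\Delta$. Every row $\uu{x}_j$ of the minor reduces modulo $r_i$ to the single smooth point $\uu{t}_i\in X(\F_{r_i})$, so, by Hensel's lemma, there exist two local parameters on $X$ at $\uu{t}_i$ that allow us to describe any lift of $\uu{t}_i$ to $X(\ZZ/r_i^M\ZZ)$, for sufficiently large $M$, by a pair $(u,v)\in (r_i\ZZ/r_i^M\ZZ)^2$. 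Substituting this parameterisation into a monomial $\uu{x}^{\uu{e}}$ and expanding produces a power series in $(u,v)$ whose $r_i$-adic order of vanishing is controlled by $\uu{e}$.

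The next step is to order the columns of the matrix $\mathbf{M}$ in \eqref{eq:MM} by their local order at $\uu{t}_i$. Working $r_i$-locally, column reduction strips successive powers of $r_i$: once the columns considered span the full space of local monomials of degree at most $k$ (a space of dimension $\binom{k+2}{2}$ in the two local parameters $u,v$), every further column indexed by a monomial of local order $k+1$ forces an additional factor of $r_i$. Summing these excess exponents over the strata of $\cE(Y)$ is precisely the content of \cite[Eq.~(5.11)]{cime}, and one obtains that
$$
r_i^{\nu'}\mid\Delta
\quad\text{with}\quad
\nu'=\frac{\log T_{\uu{m}}}{\prod_j \log B_j}\cdot\frac{Y^3}{3}\bigl(1+O(\sqrt{\log T_{\uu{m}}/Y})\bigr).
$$
Since this bound is independent of $i$, multiplying over the distinct primes $r_1,\dots,r_u$ yields $r^{\nu'}\mid\Delta$.

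To convert $\nu'$ into the shape asserted by the lemma, I would apply the asymptotic \eqref{eq:E32}, which substitutes $Y^3 = 2\sqrt{2}\, E^{3/2}(\prod_j \log B_j/\log T_{\uu{m}})^{3/2}(1+O(\sqrt{\log T_{\uu{m}}/Y}))$. The ratio $\prod_j\log B_j/\log T_{\uu{m}}$ cancels cleanly against the prefactor, leaving $\nu' = \tfrac{2\sqrt{2}}{3}E^{3/2}(1+O(\sqrt{\log T_{\uu{m}}/Y}))$. Taking $\nu = \lfloor \nu' \rfloor \in \NN$, absorbed into the error term, completes the proof. The only subtlety, rather than a genuine obstacle, is to verify that the local expansion at $\uu{t}_i$ is valid to sufficient $r_i$-adic precision; this is guaranteed by the geometric irreducibility of the reduction of $X$ modulo $r_i$ together with the smoothness of the reduction at $\uu{t}_i$, both of which were built into the choices made in \eqref{eq:def-r} and \eqref{eq:srt}.
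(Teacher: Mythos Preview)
Your approach is exactly the paper's: invoke Heath-Brown's divisibility bound \cite[Eq.~(5.11)]{cime} at each $r_i$ and then repackage via \eqref{eq:E32}. The exposition of the local mechanism (Hensel lift, two local parameters, column reduction against the $\binom{k+2}{2}$-dimensional space of local monomials of degree $\le k$) is accurate and helpful.

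There is, however, a slip in your intermediate formula. The expression you quote as Heath-Brown's output,
\[
\nu'=\frac{\log T_{\uu{m}}}{\prod_j \log B_j}\cdot\frac{Y^3}{3}\bigl(1+O(\sqrt{\log T_{\uu{m}}/Y})\bigr),
\]
is actually the asymptotic \eqref{eq:BE} for $\sum_{\uu{e}\in\cE(Y)}\log\uu{B}^{\uu{e}}$, which feeds into the \emph{upper} bound \eqref{eq:delta} for $\log\Delta$, not the divisibility exponent. With this value of $\nu'$ your ``clean cancellation'' fails: substituting \eqref{eq:E32} leaves a stray factor $(\prod_j\log B_j/\log T_{\uu{m}})^{1/2}$. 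The local argument you describe (filling up the $E$ columns with the cheapest bivariate monomials in the two local parameters) in fact yields $\nu'=\tfrac{2\sqrt{2}}{3}E^{3/2}$ directly, with an error of relative size $O(E^{-1/2})$; the role of \eqref{eq:E32} (equivalently \eqref{eq:defE}) is only to convert that error into the form $O(\sqrt{\log T_{\uu{m}}/Y})$ stated in the lemma. Once you correct the quoted formula the rest of your argument goes through unchanged.
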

	
	The   following result is due to Salberger
	\cite[Lemmas~1.4 and  1.5]{salberger}.
	
	\begin{lemma}\label{lem:otherprimes}
		Let $p$ be a prime  and assume that the reduction modulo $p$ of $X$ is geometrically integral. There exists $N_p\in \NN$ such that $p^{N_p}\mid \Delta$, with 
		$$N_p\geq \frac{2\sqrt{2}}{3}\frac{E^{3/2}}{p}+O\left(E+E^{3/2}p^{-3/2}\right).
		$$
	\end{lemma}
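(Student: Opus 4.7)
The plan is to follow Salberger's global determinant method at a single prime $p$, adapting the local analysis to our weighted exponent set $\cE(Y)$ instead of a fixed degree set. The starting point is to partition the $E$ chosen points $\underline{x}_1,\dots,\underline{x}_E$ of $S_{r,\underline{t}}$ according to their reduction mod $p$. Because $X$ reduces to a geometrically integral surface mod $p$, the Lang--Weil estimate gives $\#X(\F_p)=p^2+O(p^{3/2})$, and the number of singular $\F_p$-points is $O(1)$. Hence all but $O(E\cdot p^{-1/2})$ of our points lie over smooth $\F_p$-points $\underline{\xi}$, and on average each such $\underline{\xi}$ is hit by roughly $E/p^2$ of the $\underline{x}_j$.

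Second, I would deepen the partition, iteratively subdividing each fibre over $\underline{\xi}$ by residues mod $p^2, p^3,\dots$. On the smooth locus of the reduction, the local ring $\cO_{X,\underline{\xi}}$ is a two-dimensional regular local ring, so one can pick two local parameters $u,v$ on an affine neighbourhood of $\underline{\xi}$ and Taylor-expand any monomial $\underline{x}^{\underline{e}}$ in $u,v$. If at each level of the $p$-adic filtration one re-orders rows and column-reduces, the rows indexed by points congruent mod $p^k$ differ by vectors whose first several coordinates (in a basis adapted to the filtration of monomials by vanishing order at $\underline{\xi}$) are divisible by $p^k$. This is exactly Salberger's key observation and furnishes a lower bound for $\nu_p(\Delta)$ in terms of the dimensions $\dim_{\F_p}\bigl(\cO_{X,\underline{\xi}}/\fm_{\underline{\xi}}^j\bigr)$ intersected with the image of $\cE(Y)$.

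Third, I would sum the local contributions. For each smooth residue class $\underline{\xi}$, the contribution to $\nu_p(\Delta)$ from level $k$ is roughly (number of points in the class)$\times k \times (\text{number of exponents }\underline{e}\in\cE(Y)$ whose monomials vanish to order $\geq k$ at $\underline{\xi})$. Summing over residue classes and integrating in $k$ produces a main term of shape $\tfrac{1}{p}\cdot \tfrac{2\sqrt{2}}{3}E^{3/2}$, using \eqref{eq:E32} to convert the monomial count into $E^{3/2}$. The $O(E)$ error absorbs the boundary contributions and the singular $\F_p$-points, while the $O(E^{3/2}p^{-3/2})$ error comes from the Lang--Weil remainder and the points in classes of abnormally small or large size.

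The main obstacle is verifying that the filtration by vanishing order at $\underline{\xi}$ interacts cleanly with our exponent set $\cE(Y)$, which is defined by the weighted constraint $\log\underline{B}^{\underline{e}}\leq Y$ together with $e_i<m_i$ for some $i$, rather than by total degree. One must check that at a smooth $\F_p$-point on the affine chart the image of $\cE(Y)$ in $\cO_{X,\underline{\xi}}$ still realises the full filtration dimensions $1,3,6,\dots,\binom{j+1}{2},\dots$ up to order $j\asymp E^{1/2}$, with only the claimed error. This is precisely the content of \cite[Lemmas~1.4 and 1.5]{salberger}, and once the reduction to that local statement is in place, the lemma follows by summing geometric-series-type contributions as above.
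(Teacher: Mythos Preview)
Your proposal is correct and aligns with the paper's approach: the paper does not give an independent proof of this lemma but simply attributes it to Salberger \cite[Lemmas~1.4 and 1.5]{salberger}, and your sketch is a faithful outline of the argument behind those lemmas, ending with the same citation. The only point worth noting is that Salberger's Lemmas~1.4 and~1.5 are stated for an arbitrary collection of $E$ monomials (the bound depends only on $E$ and $p$), so the weighted shape of $\cE(Y)$ plays no role here and your worry about the filtration interacting with $\cE(Y)$ is unnecessary.
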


	We shall need a good lower bound for the  average of  $N_p$, weighted by $\log p$.
	
	\begin{lemma}\label{lem:sumother}
		Let $\pi_X$ be the product of primes $p$ such that the reduction modulo $p$ of $X$ is not geometrically integral. 
		Let $N_p$ be as in Lemma \ref{lem:otherprimes}, for any prime $p$. Then 
		$$
		\sum_{\substack{p\leq x\\p\nmid qr\pi_X}}N_p\log p\geq \frac{2\sqrt{2}}{3}E^{3/2}\left(\log x+O\left(\frac{x}{\sqrt{E}}+\log\log 3qr\pi_X\right)\right).
		$$
	\end{lemma}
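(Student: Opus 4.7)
The plan is to sum the pointwise lower bound from Lemma \ref{lem:otherprimes} over admissible primes $p \leq x$, weighted by $\log p$, and then extract the main term via Mertens' theorem while bookkeeping the error contributions carefully.

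First I would write, for each prime $p \leq x$ with $p \nmid qr\pi_X$,
$$
N_p \log p \;\geq\; \frac{2\sqrt{2}}{3}\,\frac{E^{3/2}\log p}{p} + O\!\left(E\log p + \frac{E^{3/2}\log p}{p^{3/2}}\right),
$$
and sum over such $p$. The dominant piece is
$$
\frac{2\sqrt{2}}{3}\,E^{3/2}\sum_{\substack{p\leq x\\ p\nmid qr\pi_X}} \frac{\log p}{p}.
$$
By Mertens' theorem, $\sum_{p\leq x} \frac{\log p}{p} = \log x + O(1)$, so the task is to control the loss incurred by removing the primes dividing $qr\pi_X$. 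Writing $n = qr\pi_X$, a standard estimate (maximised when $n$ is a product of the first few primes, in which case $\log n \gg p_k$ where $p_k$ is the largest prime divisor) gives
$$
\sum_{p\mid n}\frac{\log p}{p} \;\ll\; \log\log(3n),
$$
and hence
$$
\sum_{\substack{p\leq x\\ p\nmid qr\pi_X}} \frac{\log p}{p} \;=\; \log x + O(\log\log 3qr\pi_X).
$$

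For the error terms, Chebyshev's bound $\sum_{p\leq x}\log p \ll x$ yields a contribution of $O(Ex)$, and the convergence of $\sum_p \log p / p^{3/2}$ yields a contribution of $O(E^{3/2})$. Combined, these errors amount to
$$
O(Ex + E^{3/2}) \;=\; E^{3/2}\cdot O\!\left(\frac{x}{\sqrt{E}} + 1\right),
$$
and since $\log\log 3qr\pi_X \geq \log\log 3 > 0$ uniformly, the $O(1)$ term is absorbed into $O(\log\log 3qr\pi_X)$. Collecting everything and factoring out $\frac{2\sqrt{2}}{3}E^{3/2}$ delivers the claimed inequality.

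I do not expect any genuine obstacle here; this is a direct application of Lemma \ref{lem:otherprimes} together with standard analytic number-theoretic estimates. The only subtlety worth being careful about is the bound $\sum_{p\mid n}(\log p)/p \ll \log\log(3n)$, which accounts precisely for the $\log\log 3qr\pi_X$ term appearing in the statement and explains why one cannot dispense with it regardless of the size of $x$.
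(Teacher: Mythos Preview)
Your proposal is correct and follows essentially the same approach as the paper: both sum the pointwise bound from Lemma~\ref{lem:otherprimes}, isolate the main term $\frac{2\sqrt{2}}{3}E^{3/2}\sum_{p\leq x,\,p\nmid qr\pi_X}\frac{\log p}{p}$, and handle the error terms via $\sum_{p\leq x}\log p\ll x$ and the convergence of $\sum_p p^{-3/2}\log p$. The only cosmetic difference is that the paper cites \cite[Lemma~1.10]{salberger} for the estimate $\sum_{p\leq x,\,p\nmid n}\frac{\log p}{p}=\log x+O(\log\log 3n)$, whereas you derive it directly from Mertens together with the bound $\sum_{p\mid n}\frac{\log p}{p}\ll\log\log 3n$.
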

	\begin{proof}
		It follows from Lemma \ref{lem:otherprimes} that 
		\begin{align*}
			\sum_{\substack{p\leq x\\p\nmid qr\pi_X}}N_p\log p&\geq 
			\sum_{\substack{p\leq x\\p\nmid qr\pi_X}}
			\left(
			\frac{2\sqrt{2}}{3}\frac{E^{3/2}\log p}{p}+O\left(E\log p+\frac{E^{3/2}\log p}{p^{3/2}}\right)\right)\\
			&= \frac{2\sqrt{2}}{3}E^{3/2}\sum_{\substack{p\leq x\\p\nmid qr\pi_X}}\frac{\log p}{p}+O\left(Ex+E^{3/2}\right),
		\end{align*}
		by the prime number theorem.
		According to  \cite[Lemma~1.10]{salberger}, we have 
		$$
		\sum_{\substack{p\leq x\\p\nmid qr\pi_X}}\frac{\log p}{p}=\sum_{\substack{p\leq x}}\frac{\log p}{p}+O(\log\log 3qr\pi_X)=\log x+O(\log\log 3qr\pi_X).
		$$
		Hence the statement of the lemma follows.
	\end{proof}
	
	Our remaining task is to  show that $q^\lambda$ divides $\Delta$ for suitable $\lambda\in \NN$, using the congruence condition  in 
	$S_q(f,g;\uu{B})$.
	This is the part of our paper that  relies crucially on the fact that  $g$ does not depend on $x_1$.
	Indeed, our approach will use  the fact that all the monomials appearing in $g(x_2,x_3)^{m}$ belong to $\cE(Y)$, provided that  $m$ is sufficiently small, a property that 
	could fail if $g$ were to depend on $x_1$,  since it could obstruct the constraint that  $e_i<m_i$ for at least one index $i\in \{1,2,3\}$.
	
	Recall the definition \eqref{eq:pen} of the set 
	$ \cE^1(Y)$. For any $\uu{e}\in  \cE^1(Y)$ and $\uu{t}\in \ZZ_{\geq 0}^3$, let 
	$\lambda_{\uu{e},\uu{t}}$ be the largest integer $\lambda\in \ZZ_{\geq 0}$ 
	such that $\uu{e}$ can be written as $\uu{e}=\uu{e}'+\lambda \uu{t}$,  for some $\uu{e}'\in \cE^1(Y)$. 
	Note that 
	$0\leq \lambda_{\uu{e},\uu{t}}\leq \max\{e_1,e_2,e_3\}$ if 
	$\uu{t}\neq \uu{0}$. If 
	$\uu{t}= \uu{0}$, then we put $\lambda_{\uu{e},\uu{t}}=\infty$.
	
	In the proof of the next result, we shall  perform elementary column operations to the columns of a matrix, and we proceed to  explain the procedure briefly. Let $\M$ be an $E\times E$ matrix with columns  $\{v_i\}_{1\leq i\leq E}$. At the first step, we substitute $v_1$ by a vector of the form $\sum_{1\leq i\leq E} a_i^{(1)}v_i$, for suitable integers $a_1^{(1)},\dots, a_E^{(1)}$, with $a_1^{(1)}=1$. In the same way, at the $k$-th step, we substitute $v_k$ by a vector $\sum_{1\leq i\leq E} a_i^{(k)}v_i$, with $a_k^{(k)}=1$ and $a_i^{(k)}=0$ for $i<k$. After $E$ steps, we will obtain a matrix 
	$\M^{(E)}$, say. 
	For $1\leq k\leq E$, 	the  $k$-th column of $\M^{(E)}$  is equal to $\sum_{1\leq i\leq E} a_i^{(k)}v_i$, 
	with $a_k^{(k)}=1$ and $a_i^{(k)}=0$ for $1\leq i<k$. 
	Clearly  $\M^{(E)}=\mathbf{A}\M$, where $\{A_{k,i}\}_{1\leq i,k\leq E}=a_i^{(k)}$. In particular,  we have $\det(\M^{(E)})=\det(\M)$, since  $\mathbf A$ is lower triangular and the entries of the diagonal are all equal to $1$.
	
	\begin{lemma}\label{lem:mue}
		Let $\uu{t}=(0,t_2,t_3)$ be such that the coefficient of $\uu{x}^{\uu{t}}$ in $g$ is coprime to $q$.  Assume that $\uu{t}$ is the maximum or the minimum, among the monomials that appear in $g$ with a non-zero coefficient, for a strict total order $\prec'$ on $\ZZ_{\geq 0}^3$. Then $q^\lambda\mid \Delta$, where
		\begin{equation}\label{eq:deflambda}
			\lambda=
			\sum_{\uu{e}\in \cE^1(Y)}	
			\min\left\{\lambda_{\uu{e},\uu{t}},\left\lfloor \frac{Y-\log \uu{B}^{\uu{e}}}{S-\log \uu{B}^{\uu{t}}}\right\rfloor\right\}.
		\end{equation}
	\end{lemma}
	\begin{proof}
		Let $c_{\uu{t}}$ denote the coefficient of  $\uu{x}^{\uu{t}}$ in $g$, which is assumed to be coprime to $q$. Let 
		$z\in \ZZ$ such that $zc_{\uu{t}}\equiv 1\bmod q$ and write $s=(zc_{\uu{t}}-1)/q\in \ZZ$. 
		Further, let $g_{q}(\uu{x})=zg(\uu{x})-qs\uu{x}^{\uu{t}}$. Then $g_{q}\in \ZZ[x_2,x_3]$ and 
		the coefficient of $\uu{x}^{\uu{t}}$ is $1$. It is clear that 
		\begin{equation}\label{eq:side}
			g_{q}(\uu{x})\equiv 0\bmod q
		\end{equation} 
		for any $\uu{x}\in S_{r,\uu{\eta}}$.

		Fix a choice of $\uu{e}\in \cE^1(Y)$ and set
		\begin{equation}\label{eq:mu-def}
			\mu_{\uu{e},\uu{t}}=
			\min\left\{\lambda_{\uu{e},\uu{t}},\left\lfloor \frac{Y-\log \uu{B}^{\uu{e}}}{S-\log \uu{B}^{\uu{t}}}\right\rfloor\right\}.
		\end{equation}
		Let $\uu{e}'=\uu{e}-\lambda_{\uu{e},\uu{t}}\uu{t}$, where
		$\lambda_{\uu{e},\uu{t}}\in \ZZ_{\geq 0}$ is defined before the statement of the lemma. Let $$g_q^{(e)}(\uu{x})=\uu{x}^{\uu{e'}+(\lambda_{\uu{e},\uu{t}}-\mu_{\uu{e},\uu{t}})\uu{t}}g_q(\uu{x})^{\mu_{\uu{e},\uu{t}}}.$$
		Notice that $\lambda_{\uu{e},\uu{t}}-\mu_{\uu{e},\uu{t}}\geq 0$ and that 
		any  monomial in $g_q^{(e)}(\uu{x})$ must have exponent vector belonging to 
		$\cE^1(Y)$, since the exponent of $x_1$ is $e_1'=e_1<m_1$ and 
		$$
		\log \uu{B}^{\uu{e'}+(\lambda_{\uu{e},\uu{t}}-\mu_{\uu{e},\uu{t}})\uu{t}}+\mu_{\uu{e},\uu{t}}S=\log \uu{B}^{\uu{e}}+\mu_{\uu{e},\uu{t}}(S-\log \uu{B}^{\uu{t}})\leq Y.
		$$
		Moreover, we have 
		$g_q^{(e)}(\uu{x})\equiv 0\bmod{q^{\mu_{\uu{e},\uu{t}}}}$ 
		for all $\uu{x}\in S_{r,\uu{\eta}}$,
		by \eqref{eq:side}.
		
		Assume that $\uu{t}$ is the minimum for the strict total order $\prec'$ on $\ZZ_{\geq 0}^3$. Notice that the coefficient of $\uu{x}^{\uu{e}}$ in $g_q^{(e)}(\uu{x})$ is $1$, since the coefficient of $\uu{x}^{\uu{t}}$ in $g_{q}(\uu{x})$ is $1$. We index the exponents in $\cE(Y)$ in a such a way that $\uu{e}^{(1)}\prec'\uu{e}^{(2)}\prec'\dots \prec'\uu{e}^{(E)}$. Letting $v_j^{(i)}=\uu{x}_j^{\uu{e}^{(i)}}$, it follows from  \eqref{eq:defDelta} that 
		$$
		\Delta=\abs{\det 
			\left(\uu{x}_j^{\uu{e}^{(i)}}\right)_{
				\substack{
					1\leq i,j\leq E}}}=\abs{\det 
			\left(v^{(i)}\right)_{
				\substack{
					1\leq i\leq E}}}.
		$$
		
		We proceed by
		performing elementary column operations to the column $v^{(k)}$, starting from $k=1$ to $k=E$, for all $k$ such that $\uu{e}^{(k)}\in\cE^1(Y)$. Notice that the coefficient of $\uu{x}^{\uu{e}^{(k)}}$ 
		in $g_{q}^{(\uu{e}^{(k)})}$ is $1$ and it is the smallest monomial (for the order $\prec'$) appearing in the polynomial.
		On adding every other monomial of
		$g_{q}^{(\uu{e}^{(k)})}(\uu{x})$, 
		we may assume that  the column takes the form
		$g_{q}^{(\uu{e}^{(k)})}(\uu{x})$. Notice that we have substituted $v^{(k)}$ with $\sum_{1\leq k\leq E} a_i^{(k)}v^{(i)}$, with $a_k^{(k)}=1$ and $a_i^{(k)}=0$ for $i<k$. Indeed, every monomial in $g_{q}^{(\uu{e}^{(k)})}(\uu{x})$ is larger for the order than $\uu{x}^{\uu{e}^{(k)}}$ and so it corresponds to a column with larger index. As we explained before the statement of the lemma, this process does not change the value of $\Delta$.
		Each term of the $k$-th column is divisible by $q^{\mu_{\uu{e}^{(k)},\uu{t}}}$. The statement of the lemma easily follows.
		
		If we assume that $\uu{t}$ is the maximum for the strict total order $\prec'$ on $\ZZ_{\geq 0}^3$, we index the exponents in $\cE(Y)$ in a such a way that $\uu{e}^{(E)}\prec'\uu{e}^{(E-1)}\prec'\dots \prec'\uu{e}^{(1)}$ and the lemma follows in the exact same way.
	\end{proof}
	
	Our next task is to get a useful bound 
	for 
	$\lambda$ in some  particular cases. In what follows, it will be convenient to set 
	\begin{equation}\label{def:R}
		R=\frac{m_1(\log B_1)^{\frac 32}(\log B_2)^{\frac 12}(\log B_3)^{\frac 12}}{2S(\log T_{\uu{m}})^{\frac32}},
	\end{equation}
	where $T_{\uu{m}}$ and $S$ were defined in  \eqref{def:T} and \eqref{def:S}, respectively.
	In the light of \eqref{eq:defK}, we clearly have 
	\begin{equation}\label{eq:lid}
		\log K_\ve=\sqrt{\frac{\prod_i \log B_i}{\log T_{\uu{m}}}}-R\log q+\ve\log B.
	\end{equation}
	
	We are going to apply Lemma \ref{lem:mue} in the case $\uu{t}=(0,0,0)$,  or the case $\uu{t}\in \{(0,t_2,0), (0,0,t_3)\}$ with the additional assumption that $S=\log\uu{B}^{\uu{t}}$. In the first case, $\uu{t}$ is the minimum, among the monomials that appear in $g$ with a non-zero coefficient, for the lexicographic order. If $\uu{t}=(0,t_2,0)$, then $\uu{t}$ is the maximum for the order $(e_1,e_2,e_3)\prec'(e_1',e_2',e_3')$ if $e_2<e_2'$, or $e_2=e_2'$ and $e_3<e_3'$, or $e_2=e_2'$ and $e_3=e_3'$ and $e_1<e_1'$, since we are assuming $S=\log\uu{B}^{\uu{t}}$. In the same way, if $\uu{t}=(0,0,t_3)$, then $\uu{t}$ is the maximum for a strict total order. Thus, in all these cases, we can apply Lemma \ref{lem:mue}.

	\begin{lemma}\label{lem:ineqlambda2}
		Let $\uu{t}\in \{(0,0,0),(0,t_2,0), (0,0,t_3)\}$ and let $\lambda$ be given by \eqref{eq:deflambda}. If $\uu{t}\neq (0,0,0)$, furthermore,  assume that $S=\log\uu{B}^{\uu{t}}$.
		Then
		$$
		\lambda= \frac{2\sqrt{2}RE^{3/2}}{3}\left(1+O\left(\sqrt{\frac{\log B}{Y}}\right)\right),
		$$
		where $R$ is given by \eqref{def:R}.
	\end{lemma}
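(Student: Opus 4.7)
The plan is to compute $\lambda$ case by case, simplifying the expression \eqref{eq:deflambda} in each scenario and then converting the resulting sum into an integral.

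First I would simplify the minimum in \eqref{eq:deflambda} for each form of $\uu{t}$. When $\uu{t}=\uu{0}$, the convention $\lambda_{\uu{e},\uu{0}}=\infty$ makes the minimum equal to $\lfloor(Y-\log\uu{B}^{\uu{e}})/S\rfloor$. When $\uu{t}=(0,t_2,0)$ with $S=\log\uu{B}^{\uu{t}}=t_2\log B_2$, the denominator in the floor argument vanishes, so the floor term is effectively $+\infty$; unpacking the definition of $\lambda_{\uu{e},\uu{t}}$ then shows that the minimum reduces to $\lfloor e_2/t_2\rfloor$. The case $\uu{t}=(0,0,t_3)$ is entirely symmetric. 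Dropping the floors at this stage introduces an error of $O(\#\cE^1(Y))=O(Y^2/\log B)$, which will be negligible compared to the main term.

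Next I would convert the remaining sum over $\cE^1(Y)$ into an integral. Since $e_1$ ranges over the finite set $\{0,1,\dots,m_1-1\}$, the sum splits accordingly. For each fixed $e_1$, the $(e_2,e_3)$-indices run over the lattice points in the triangle
$$
T(e_1)=\bigl\{(e_2,e_3)\in\RR^2_{\geq 0}:e_2\log B_2+e_3\log B_3\leq Y-e_1\log B_1\bigr\},
$$
and a routine Euler--Maclaurin estimate converts the discrete sum over $T(e_1)$ into a double integral of the relevant summand, with boundary error controlled by the perimeter of $T(e_1)$. Carrying out the double integral explicitly and then summing over $e_1$ produces the leading values
$$
\frac{m_1Y^3}{6t_2(\log B_2)^2\log B_3},\qquad \frac{m_1Y^3}{6t_3(\log B_3)^2\log B_2},\qquad \frac{m_1Y^3}{6S\log B_2\log B_3}
$$
in the three respective cases, each with a multiplicative correction of the form $1+O(\log B/Y)$ absorbing the expansion of $(Y-e_1\log B_1)^3$ and the triangle's boundary contribution.

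Finally I would translate these expressions in $Y$ into ones in $E$ by invoking \eqref{eq:E32}, and then verify, using the definition \eqref{def:R} of $R$, that each of the three leading terms equals $\frac{2\sqrt{2}RE^{3/2}}{3}(1+O(\sqrt{\log B/Y}))$. This is a short algebraic check in which the exponent pattern of $\log B_1,\log B_2,\log B_3$ and the factor $\log T_{\uu{m}}$ cancel cleanly against those arising in $R\cdot E^{3/2}$.

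The main obstacle is the careful bookkeeping of the three distinct error sources: the $O(1)$ per term from stripping the floors, the Euler--Maclaurin boundary error over each $T(e_1)$, and the $O(\sqrt{\log T_{\uu{m}}/Y})$ error built into \eqref{eq:E32}. Since $\log T_{\uu{m}}\asymp\log B$ and the first two errors are of relative size $O(\log B/Y)$, all three are dominated by $O(\sqrt{\log B/Y})$ relative to the main term, yielding the claimed uniform error.
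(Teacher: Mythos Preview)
Your proposal is correct and follows essentially the same route as the paper. Both arguments simplify the minimum in \eqref{eq:deflambda} to $\lfloor(Y-\log\uu{B}^{\uu{e}})/S\rfloor$ or $\lfloor e_2/t_2\rfloor$ (resp.\ $\lfloor e_3/t_3\rfloor$) in the three cases, strip the floors at cost $O(\#\cE^1(Y))$, evaluate the resulting sums over $\cE^1(Y)$ by splitting off $e_1<m_1$ and counting lattice points in the triangles $T(e_1)$ to reach the main term $m_1Y^3/(6S\log B_2\log B_3)$, and finally invoke \eqref{eq:E32}; the only cosmetic difference is that the paper carries out the lattice sums by explicit elementary summation rather than phrasing it as Euler--Maclaurin.
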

	\begin{proof}
		We begin by assuming that  $\uu{t}=(0,0,0)$, so that 
		$\lambda_{\uu{e},\uu{t}}=\infty$ and 
		$\log \uu{B}^{\uu{t}}=0$.
		We then have
		\begin{align*}
			\lambda=\sum_{\uu{e}\in\cE^1(Y)}\left\lfloor \frac{Y-\log\uu{B}^{\uu{e}}}{S} \right\rfloor
			&=\sum_{\uu{e}\in\cE^1(Y)}\left(\frac{Y-\log\uu{B}^{\uu{e}}}{S}+O(1)\right)
			\\&=\frac{Y}{S}\#\cE^1(Y)-\frac{1}{S}\sum_{\uu{e}\in\cE^1(Y)}\log\uu{B}^{\uu{e}}+O\left(\#\cE^1(Y)\right).
		\end{align*}
		Moreover,
		$$		\sum_{e_2\log B_2+e_3\log B_3\leq Y_1}1=\frac{Y_1^2}{2\log B_2\log B_3}\left(1+O\left(\frac{\log B}{Y_1}\right)\right),
		$$
		by  \cite[Eq.~(5.6)]{cime}. Thus we obtain
		\begin{align*}
			\#\cE^1(Y)&=\sum_{e_1<m_1}\sum_{e_2\log B_2+e_3\log B_3\leq Y-e_1\log B_1}1\\
			&=\sum_{e_1<m_1}\frac{(Y-e_1\log B_1)^2}{2\log B_2\log B_3}\left(1+O\left(\frac{\log B}{(Y-e_1\log B_1)}\right)\right)\nonumber\\&=\frac{m_1Y^2}{2\log B_2\log B_3}\left(1+O\left(\frac{\log B}{Y}\right)\right),
		\end{align*}
		since 
		$e_1\log B_1< m_1\log B_1\ll\log B\ll Y$.
		
		Next, we note that  
		\begin{align*}
			\sum_{e_2\log B_2+e_3\log B_3\leq Y_1}e_2&=\sum_{e_2\leq \frac{Y_1}{\log B_2}}\sum_{e_3\leq \frac{Y_1-e_2\log B_2}{\log B_3}}e_2
			\\
			&=\sum_{e_2\leq \frac{Y_1}{\log B_2}}\left\lfloor\frac{Y_1-e_2\log B_2}{\log B_3}+1\right\rfloor e_2\\
			&=\sum_{e_2\leq \frac{Y_1}{\log B_2}}\left(\frac{Y_1-e_2\log B_2}{\log B_3}\right) e_2
			+O\left(\frac{Y_1^2}{(\log B_2)^2}\right)\\&
			=\frac{Y_1^3}{6(\log B_2)^2\log B_3}+O\left(\frac{Y_1^2}{\log B_2\log B_3}+\frac{Y_1^2}{(\log B_2)^2}\right),
		\end{align*}
		since 
		$$
		\sum_{n\leq X}(X-n)n=\sum_{n\leq X}Xn-n^2=\frac{X^3}{2}-\frac{X^3}{3}+O(X^2)=\frac{X^3}{6}+O(X^2),
		$$
		for any $X>0$.
		But then it follows that 
		\begin{align}\label{eq:wizz1}
			\sum_{\uu{e}\in\cE^1(Y)}e_2
			=~&	\sum_{e_1<m_1}
			\sum_{\substack{e_2\log B_2+e_3\log B_3\leq Y-e_1\log B_1}}e_2\nonumber\\
			=~&\sum_{e_1<m_1}\frac{(Y-e_1\log B_1)^3}{6(\log B_2)^2\log B_3}
			+O\left(\frac{Y_1^2}{\log B_2\log B_3}+\frac{Y_1^2}{(\log B_2)^2}\right)
			\nonumber\\
			=~&
			\frac{m_1Y^3}{6(\log B_2)^2\log B_3}\left(1+O\left(\frac{\log B}{Y}\right)\right).
		\end{align}
		Similarly, 
		$$
		\sum_{\uu{e}\in\cE^1(Y)}e_3
		=\frac{m_1Y^3}{6\log B_2(\log B_3)^2}\left(1+O\left(\frac{\log B}{Y}\right)\right).	
		$$
		Hence we deduce that 
		\begin{align*}
			\sum_{\uu{e}\in\cE^1(Y)}\log\uu{B}^{\uu{e}}=~&\sum_{\uu{e}\in\cE^1(Y)}\left(e_2\log B_2+e_3\log B_3\right)+O\left(\#\cE^1(Y)\log B_1
			\right)\\
			=~&
			\frac{m_1Y^3}{3\log B_2\log B_3}\left(1+O\left(\frac{\log B}{Y}\right)\right).	\end{align*}
		Returning to our initial expression for $\lambda$, and applying  \eqref{eq:E32},
		we may now insert these estimates to conclude that 
		\begin{align*}
			\lambda&
			=\left(\frac{m_1Y^3}{2S\log B_2\log B_3}-\frac{m_1Y^3}{3S\log B_2\log B_3}\right)\left(1+O\left(\frac{\log B}{Y}\right)\right)\\&=\frac{m_1Y^3}{6S\log B_2\log B_3}\left(1+O\left(\frac{\log B}{Y}\right)\right)\\
			&=\frac{2\sqrt{2}m_1E^{3/2}(\log B_1)^{3/2}(\log B_2)^{1/2}(\log B_3)^{1/2}}{6S(\log T_{\uu{m}})^{3/2}}
			\left(1+O\left(\sqrt{\frac{\log B}{Y}}\right)\right).
		\end{align*}
		This concludes the proof of the lemma in 
		the case $\uu{t}=(0,0,0)$.

		We proceed to discuss the case  $\uu{t}=(0,t_2,0)$, with $t_2\neq 0$, the case $\uu{t}=(0,0,t_3)$ being identical. In this case we have $S-\log \uu{B}^{\uu{t}}=0$
		in  \eqref{eq:deflambda} and  $\lambda_{\uu{e},\uu{t}}=\lfloor e_2/t_2\rfloor=e_2/t_2+O(1)$, in the notation before Lemma~\ref{lem:mue}. But then it follows from \eqref{eq:wizz1} 
		that
		\begin{align*}
			\lambda= \sum_{\uu{e}\in \cE^1(Y)} \lambda_{\uu{e},\uu{t}}
			&=\frac{1}{t_2}\sum_{\uu{e}\in \cE^1(Y)} e_2+O(\#\cE^1(Y))\\
			&=
			\frac{m_1Y^3}{6t_2(\log B_2)^2\log B_3}\left(1+O\left(\frac{\log B}{Y}\right)\right).
		\end{align*}
		Recalling that  $S=t_2\log B_2$, we may therefore conclude 
		the statement of the lemma  by  applying  \eqref{eq:E32}, as previously. 	\end{proof}

	We are now ready to bring together our various  results about the $p$-adic valuation of $\Delta$ in order to record a lower bound for it.
	
	\begin{lemma}\label{lem:inD}
		Let $\ve>0$  and assume that $K_\ve\geq 1$, in the notation of 
		\eqref{eq:defK}. 	
		Assume that 
		$\gcd(q,g(0,0))=1$
		and 
		$\log r\pi_X\ll \log B$, and furthermore, that 
		$B\gg_\ve 1$ and  $Y\gg_\ve \log B$ for sufficiently large implied constants that depend only on $\ve$.  Then either $\Delta=0$ or 
		$$
		\log \Delta\geq \frac{2\sqrt{2}E^{3/2}}{3}\left(R\log q+\log \sqrt{E}r-\frac\ve2\log B\right).
		$$
	\end{lemma}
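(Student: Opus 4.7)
The plan is to combine all the divisibility results for $\Delta$ already assembled in Lemmas~\ref{lem:p}, \ref{lem:otherprimes} and \ref{lem:mue}. Assuming $\Delta\neq 0$, since $q$, $r$, and every prime $p\nmid qr\pi_X$ contributing to Lemma~\ref{lem:sumother} are pairwise coprime, one has
\begin{equation*}
\log\Delta \;\geq\; \nu\log r \;+\; \lambda\log q \;+\!\!\!\sum_{\substack{p\leq x\\ p\nmid qr\pi_X}}\!\!N_p\log p.
\end{equation*}
To obtain the $q^{\lambda}$-divisibility when $q$ is not a prime power, I would factor $q=\prod_i p_i^{a_i}$ and apply Lemma~\ref{lem:mue} to each prime-power divisor with $\uu{t}=(0,0,0)$. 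This is legitimate because the coefficient of $\uu{x}^{(0,0,0)}$ in $g$ is $g(0,0)$, which is coprime to $q$ by hypothesis and hence to every $p_i^{a_i}$; the resulting $\lambda$ is independent of $i$, so $q^\lambda\mid\Delta$ by multiplicativity.

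Next I substitute in the asymptotic estimates. Lemma~\ref{lem:p} gives $\nu=\frac{2\sqrt 2}{3}E^{3/2}(1+O(\sqrt{\log T_{\uu{m}}/Y}))$; Lemma~\ref{lem:ineqlambda2} with $\uu{t}=(0,0,0)$ gives $\lambda=\frac{2\sqrt 2}{3}RE^{3/2}(1+O(\sqrt{\log B/Y}))$; and the natural choice $x=\sqrt{E}$ in Lemma~\ref{lem:sumother} yields
\begin{equation*}
\sum_{\substack{p\leq\sqrt{E}\\ p\nmid qr\pi_X}}\!\!N_p\log p \;\geq\; \tfrac{2\sqrt 2}{3}E^{3/2}\!\left(\tfrac12\log E + O\!\left(1+\log\log 3qr\pi_X\right)\right).
\end{equation*}
Adding $\nu\log r$ to this sum groups as $\tfrac{2\sqrt 2}{3}E^{3/2}\log(\sqrt{E}\,r)$, and the $\lambda\log q$ term supplies the $R\log q$ contribution. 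This produces exactly the main term in the statement.

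The main obstacle is showing that all the collected error terms are bounded by $\tfrac{\sqrt 2}{3}\varepsilon E^{3/2}\log B$. There are three such sources: (i) the multiplicative $O(\sqrt{\log T_{\uu{m}}/Y})$ and $O(\sqrt{\log B/Y})$ corrections in $\nu$ and $\lambda$, which together contribute $O(E^{3/2}(\log r+R\log q)\sqrt{\log B/Y})$; (ii) the $O(E^{3/2})$ additive term from Lemma~\ref{lem:sumother} with $x=\sqrt{E}$; and (iii) the $O(E^{3/2}\log\log 3qr\pi_X)$ term. Using the hypothesis $\log r\pi_X\ll\log B$, together with the bound $R\log q\ll\log B$ that follows directly from $K_\varepsilon\geq 1$ via \eqref{eq:lid}, and the hypothesis $Y\gg_\varepsilon\log B$, each error can be absorbed into $\varepsilon E^{3/2}\log B$ provided $B\gg_\varepsilon 1$ (noting $\log T_{\uu{m}}\leq (\deg f)\log B$, so $Y\gg_\varepsilon\log T_{\uu{m}}$ automatically). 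Balancing these constants carefully is the delicate point, but once done the claimed lower bound follows.
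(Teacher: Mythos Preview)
Your proposal is correct and follows essentially the same approach as the paper's own proof: combine the $q$-, $r$-, and small-prime divisibilities of $\Delta$, take $x\asymp\sqrt{E}$ in Lemma~\ref{lem:sumother}, and absorb the error terms using $K_\ve\geq 1$ (which forces $R\log q=O_\ve(\log B)$ via \eqref{eq:lid}) together with $\log r\pi_X\ll\log B$ and $Y\gg_\ve\log B$. Your explicit factorisation $q=\prod p_i^{a_i}$ before invoking Lemma~\ref{lem:mue} with $\uu{t}=(0,0,0)$ is in fact a point the paper glosses over, so you are slightly more careful there.
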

	\begin{proof}
		Since $\gcd(q,g(0,0))=1$, we may apply Lemma
		\ref{lem:mue} with $\uu{t}=(0,0,0)$ and get $q^{\lambda}\mid \Delta$.
		Combining this with Lemmas \ref{lem:p} and  \ref{lem:otherprimes}, 
		we therefore deduce that either $\Delta=0$ or
		\begin{align*}
			\log \Delta&\geq \lambda \log q+\nu\log r+\sum_{\substack{p\ll E^{1/2}\\p\nmid qr\pi_X}}N_p\log p.
		\end{align*}
		By Lemma \ref{lem:ineqlambda2}, we deduce
		that 
		$$
		\lambda=\frac{2\sqrt{2}RE^{3/2}}{3}\left(1+O\left(\sqrt{\frac{\log B}{Y}}\right)\right).
		$$
		Applying Lemma \ref{lem:sumother}, we conclude that there exists $E_1\geq 0$ such that 
		$$			\log \Delta			
		\geq 			
		\frac{2\sqrt{2}E^{3/2}}{3}\left(R\log q+\log \sqrt{E}r\right) -E_1,
		$$
		with 
		$$
		E_1\ll E^{3/2}\left(\sqrt{\frac{\log T_{\uu{m}}}{Y}}\log r+\log\log 3qr\pi_X+\sqrt{\frac{\log B}{Y}}R\log q\right)
		.
		$$
		By  \eqref{eq:lid}, we have
		$$
		R\log q=\left(\frac{\prod_i \log B_i}{\log T_{\uu{m}}}\right)^{\frac 12}+\ve \log B-\log K_\ve=O_\ve(\log B),
		$$
		since we are assuming that $K_\ve\geq 1$. Moreover, by \eqref{def:R} and the definitions of $S$ and $T_{\uu{m}}$ in \eqref{def:S} and \eqref{def:T}, we have
		\begin{align*}
			\log \log 3q=O_\ve(\log \log B)+\log( R^{-1})&=O_\ve(\log \log B+\log (S\log T_{\uu{m}}))\\ &=O_\ve(\log \log B).
		\end{align*}
		Hence
		\begin{align*}
			E_1&\ll_\ve E^{3/2}\left(\sqrt{\frac{\log B}{Y}}\log Br+\log\log 3qr\pi_X\right)
			\\
			&\ll_\ve E^{3/2}\left(\log\log B+\log B \sqrt{\frac{\log B}{Y}}\right), 
		\end{align*}
		since we have assumed that $\log r\pi_X\ll\log B$.
		On  assuming that $B$ and $Y/\log B$ are bigger than a sufficiently large constant depending only on $\ve$, we readily conclude that $E_1<\frac{\ve}{2}E^{3/2}\log B$, 
		from which the statement of the lemma is clear.
	\end{proof}

	Lemma \ref{lem:inD} is a critical ingredient in Theorem \ref{thm:side}. For Theorem \ref{thm:side'} we will need  the following variant.

	\begin{lemma}\label{lem:inD'}
		Let $\ve>0$  and assume that $K_\ve\geq 1$, in the notation of 
		\eqref{eq:defK}. 	Assume that $B_1=B_2=B_3=B$ and that 
		$q$ satisfies the hypothesis \eqref{eq:gcdq}. 
		Assume that $\log r\pi_X\ll \log B$, and furthermore, that 
		$B\gg_\ve 1$ and  $Y\gg_\ve \log B$ for sufficiently large implied constants that depend only on $\ve$.  Then either $\Delta=0$ or 
		$$
		\log \Delta\geq \frac{2\sqrt{2}E^{3/2}}{3}\left(R\log q+\log \sqrt{E}r-\frac\ve2\log B\right).
		$$
	\end{lemma}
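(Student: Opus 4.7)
The plan is to mirror the proof of Lemma~\ref{lem:inD}, replacing the single application of Lemma~\ref{lem:mue} (with $\uu{t}=\uu{0}$) by a flexible choice of $\uu{t}$ tailored to each prime factor of $q$. First I would factor $q=\prod_i p_i^{a_i}$ into pairwise coprime prime powers. For each prime $p_i\mid q$, the hypothesis \eqref{eq:gcdq} guarantees that at least one of $g(0,0)$, $g_0(1,0)$, $g_0(0,1)$ is coprime to $p_i$, so I can select
$$
\uu{t}^{(i)}\in\{(0,0,0),\,(0,\deg g,0),\,(0,0,\deg g)\}
$$
with the coefficient of $\uu{x}^{\uu{t}^{(i)}}$ in $g$ nonzero and coprime to $p_i^{a_i}$.

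Next I would verify that Lemma~\ref{lem:ineqlambda2} applies for every such $\uu{t}^{(i)}$. The case $\uu{t}^{(i)}=(0,0,0)$ is free. For $\uu{t}^{(i)}\in\{(0,\deg g,0),(0,0,\deg g)\}$, the assumption $B_1=B_2=B_3=B$ forces $S=(\deg g)\log B=\log\uu{B}^{\uu{t}^{(i)}}$, since every top-degree monomial of $g$ has the same weight $(\deg g)\log B$ and at least one top-degree monomial has nonzero coefficient. Applying Lemma~\ref{lem:mue} to $p_i^{a_i}$ with $\uu{t}^{(i)}$, and then Lemma~\ref{lem:ineqlambda2} to estimate the resulting exponent, yields $p_i^{a_i\lambda_i}\mid\Delta$ with
$$
\lambda_i=\frac{2\sqrt{2}RE^{3/2}}{3}\left(1+O\bigl(\sqrt{\log B/Y}\bigr)\right).
$$
Because the $p_i^{a_i}$ are pairwise coprime, this implies $q^{\lambda^*}\mid\Delta$, where $\lambda^*=\min_i\lambda_i$ admits the same asymptotic expression.

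From here the argument proceeds exactly as in the proof of Lemma~\ref{lem:inD}. I would combine the divisibility $q^{\lambda^*}\mid\Delta$ with the contributions from $r$ (Lemma~\ref{lem:p}) and from the other primes $p\nmid qr\pi_X$ with $p\ll\sqrt{E}$ (via Lemmas~\ref{lem:otherprimes} and~\ref{lem:sumother}) to bound $\log\Delta$ from below. The bound $R\log q=O_\ve(\log B)$ follows from $K_\ve\geq 1$ together with \eqref{eq:lid}, and the remaining error terms are absorbed into $\tfrac{\ve}{2}E^{3/2}\log B$ using the hypotheses $\log r\pi_X\ll\log B$, $B\gg_\ve 1$, and $Y\gg_\ve\log B$. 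The main new point, and hence the main obstacle, is the case analysis on the prime divisors of $q$: the flexibility in Lemma~\ref{lem:ineqlambda2}, which accommodates all three extremal choices of $\uu{t}$, is precisely what is needed to handle the weaker hypothesis \eqref{eq:gcdq}, and this flexibility is available here only because $B_1=B_2=B_3$ equalises the weights of the top-degree monomials of $g$.
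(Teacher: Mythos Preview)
Your proposal is correct and follows essentially the same route as the paper: a case analysis over the prime-power divisors of $q$, choosing $\uu{t}\in\{(0,0,0),(0,\deg g,0),(0,0,\deg g)\}$ according to which of $g(0,0)$, $g_0(1,0)$, $g_0(0,1)$ is coprime to the given prime, invoking Lemmas~\ref{lem:mue} and~\ref{lem:ineqlambda2} (the latter using $S=(\deg g)\log B$ when $B_1=B_2=B_3$), and then finishing exactly as in Lemma~\ref{lem:inD}. The paper's writeup is more terse but the substance is identical.
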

	\begin{proof}
		Let us suppose that $l=\deg g$ and let  $p^j\| q$. 
		If $p\nmid g(0,0)$ then we deduce 
		from  Lemmas
		\ref{lem:mue} (applied to $p^j$ and not to $q$)  and \ref{lem:ineqlambda2}
		that 
		$p^{j\lambda}\mid \Delta$, where 
		$$
		\lambda=\frac{2\sqrt{2}RE^{3/2}}{3}\left(1+O\left(\sqrt{\frac{\log B}{Y}}\right)\right).
		$$
		If $p\mid g(0,0)$, on the other hand, then 
		\eqref{eq:gcdq} implies that 
		$p\nmid g_0(1,0)$ or $p\nmid g_0(0,1)$ and so we can apply 
		Lemma
		\ref{lem:mue}  with  $\uu{t}\in \{(0,l,0),(0,0,l)\}$.
		But then Lemma 
		\ref{lem:ineqlambda2} leads to precisely the same conclusion,
		since 
		$
		S= l\log B$. Thus we may conclude that $q^\lambda\mid \Delta$ and the remainder of the argument  proceeds exactly as in the proof of Lemma \ref{lem:inD}.
	\end{proof}
	
	Building on the previous two results we can prove the following result, in
	which we recall the notation 
	\eqref{eq:srt} for 
	$S_{r,\uu{\eta}}$. As we already mentioned at the beginning of the section, our proof is a blend of the determinant method developed by Salberger \cite{salberger} with the extra saving given by the side condition. The next result  is an analogue of \cite[Thm.~2.2]{salberger}.
	
	\begin{lemma}\label{prop:pol}
		Let $\ve>0$ and assume that $K_\ve\geq 1$,
		in the notation of 
		\eqref{eq:defK}. 
		Assume that $\gcd(q,g(0,0))=1$,  or that $B_1=B_2=B_3=B$ and 
		$q$ satisfies  \eqref{eq:gcdq}.
		Assume that  $\log \pi_X\ll \log B$ and  $B\gg_\ve1$, for a sufficiently large implied constant depending only on $\ve$.
		Let $r_1<\dots< r_u$ be a sequence of primes that are coprime with $q$ and such that the reduction modulo $r_i$ of $X$ is geometrically irreducible. 
		Let $r$ be given by \eqref{eq:def-r} and  assume that $\log r\ll \log B$. 
		For each $1\leq i\leq u$, let $\uu{\eta}_i\in X(\F_{r_i})$ be a non-singular point of the reduction modulo $r_i$ of $X$.
		Then there exists a polynomial $f_{\uu{\eta}}\in \ZZ[\uu{x}]$, which is coprime with $f$,  such that the points in $S_{r,\uu{\eta}}$ are all zeros of $f_{\uu{\eta}}$. Moreover, the polynomial has degree $O_\ve(
		\max\{1,K_\ve/r\}\log B/\log B')$. 
	\end{lemma}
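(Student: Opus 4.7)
My plan is to apply the determinant method to the $J \times E$ matrix $\mathbf{M}$ introduced in \eqref{eq:MM} and to show that, for a suitably chosen $Y$, every $E \times E$ minor vanishes, so that $\rank \mathbf{M} < E$. A non-trivial integer vector in the (right) kernel of $\mathbf{M}$ then produces a polynomial $f_{\uu{t}}(\uu{x}) = \sum_{\uu{e} \in \mathcal{E}(Y)} a_{\uu{e}} \uu{x}^{\uu{e}}$ vanishing on every $\uu{x}_j \in S_{r,\uu{t}}$. Coprimality of $f_{\uu{t}}$ with $f$ is automatic from the observation recorded just after \eqref{eq:pen}: any polynomial whose monomials have exponents in $\mathcal{E}(Y)$ is coprime to $f$. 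The case $\#S_{r,\uu{t}} < E$ is immediate, as $\mathbf{M}$ then has strictly more columns than rows.

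For $\#S_{r,\uu{t}} \geq E$, fix any $E \times E$ submatrix $\Delta$. The entry-wise bound \eqref{eq:delta} combined with \eqref{eq:E32} gives $\log|\Delta| \leq \tfrac{2\sqrt 2}{3} E^{3/2}\sqrt{\prod_i \log B_i/\log T_{\uu{m}}}(1+o(1))$, while Lemma~\ref{lem:inD} (under $\gcd(q, g(0,0)) = 1$) or Lemma~\ref{lem:inD'} (under \eqref{eq:gcdq} with $B_1 = B_2 = B_3 = B$) supplies the matching lower bound $\tfrac{2\sqrt 2}{3} E^{3/2}\bigl(R \log q + \log \sqrt{E}\, r - \tfrac{\ve}{2}\log B\bigr)$ whenever $\Delta \neq 0$. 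Using \eqref{eq:lid} to eliminate $R\log q$, the two bounds collide exactly when $\log(\sqrt{E}\, r) > \log K_\ve + O(\ve \log B)$, i.e., as soon as $E \gg_\ve \max\{1, (K_\ve/r)^2\}$. Since the $p$-adic analysis underlying Lemmas~\ref{lem:p}--\ref{lem:inD'} is insensitive to the choice of $E$ rows, it applies uniformly to every $E \times E$ minor, forcing them all to vanish and hence $\rank \mathbf{M} < E$.

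Finally, I would calibrate $Y$ to be a sufficiently large $\ve$-dependent multiple of $\max\bigl\{\log B,\, (K_\ve/r)\sqrt{\prod_i \log B_i/\log T_{\uu{m}}}\bigr\}$, which simultaneously delivers the rank drop and the hypothesis $Y \gg_\ve \log B$ required by Lemmas~\ref{lem:inD}, \ref{lem:inD'}. Every $\uu{e} \in \mathcal{E}(Y)$ satisfies $e_i \leq Y/\log B_i \leq Y/\log B'$, hence $\deg f_{\uu{t}} \leq 3Y/\log B'$. The hypothesis $\uu{m} \neq \uu{0}$ gives $\log T_{\uu{m}} \geq \log B'$, and writing $\log B_1 \log B_2 \log B_3$ as (max)$\cdot$(middle)$\cdot$(min) $\leq (\log B)^2 \log B'$ yields $\sqrt{\prod_i \log B_i/\log T_{\uu{m}}} \leq \log B$. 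Together these reduce the degree bound to $O_\ve(\max\{1, K_\ve/r\}\log B/\log B')$, as claimed. The main obstacle will be the careful bookkeeping of the error terms in \eqref{eq:delta} and \eqref{eq:E32} to ensure that the $B^\ve$ slack built into $K_\ve$ absorbs them, together with verifying that the quantity $\log r\pi_X \ll \log B$ propagates through the chosen $Y$ regime.
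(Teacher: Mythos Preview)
Your proposal is correct and follows essentially the same approach as the paper: choose $Y$ large enough that $\sqrt{E}\,r>K_\ve$ (and $Y\gg_\ve\log B$), compare the upper bound \eqref{eq:delta}--\eqref{eq:E32} with the lower bound from Lemma~\ref{lem:inD} or~\ref{lem:inD'} via \eqref{eq:lid} to force every $E\times E$ minor to vanish, and read off $f_{\uu{t}}$ from the kernel. Your explicit verification that $\sqrt{\prod_i\log B_i/\log T_{\uu{m}}}\le\log B$ (using $\log T_{\uu{m}}\ge\log B'$ and $\prod_i\log B_i\le(\log B)^2\log B'$) makes transparent a step the paper leaves implicit when passing from the minimality of $Y$ to $Y=O_\ve(\max\{1,K_\ve/r\}\log B)$.
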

	\begin{proof}
		Take $Y$ to be the smallest positive number such that $Y/\log B$ is  
		bigger than a constant depending only on $\ve$,  as in Lemmas \ref{lem:inD} or \ref{lem:inD'},  and such that $ \sqrt{E}r> K_\ve$.
		Let $\Delta$ be the determinant of an $E\times E$ minor of the matrix $\M$ as defined in \eqref{eq:MM}. 
		If $\Delta=0$ for all $E\times E$ minors,
		then $\rank\M< E$ and there is a $\ZZ$-linear combination $f_{\uu{\eta}}(\uu{x})=\sum_{\uu{e}\in\cE(Y)}a_{\uu{e}}\uu{x}^{\uu{e}}$ of monomials in $\cE(Y)$, such that 
		the points in $S_{r,\uu{\eta}}$ all vanish on  $f_{\uu{\eta}}$. 
		Moreover, as we proved at the beginning of the section, $f_{\uu{\eta}}$ will be coprime to $f$. Our choice of $Y$ ensures that 
		$$
		1<\frac{ \sqrt{E}r}{K_\ve}=\sqrt{\frac{\log T_{\uu{m}}}{\prod_i\log  B_i}} \cdot \frac{Y}{\sqrt{2}}\left(1+O\left(\sqrt{\frac{\log T_{\uu{m}}}{Y}}\right)\right)\frac{r}{K_\ve},$$
		by \eqref{eq:defE}.
		Since $Y$ is defined to be the smallest positive integer satisfying the constraints imposed in its definition, so it follows that $Y=O_{\ve}(\max\{1,K_\ve/r\}\log B)$. Finally,  the degree of $f_{\uu{\eta}}$ is bounded by $\lfloor Y/\log B'\rfloor$, whence 
		$$
		\deg f_{\uu{\eta}}\ll Y/\log B' \ll_{\ve}\max\{1,K_\ve/r\}\log B/\log B',
		$$
		as claimed.
		
		It remains to consider the possibility that $\Delta\neq 0$ for an $E\times E$ minor.		
		But then it follows from combining  \eqref{eq:lid} with  Lemmas \ref{lem:inD} or \ref{lem:inD'} that 
		\begin{align*}
			\log \Delta&\geq \frac{2\sqrt{2}E^{\frac 32}}{3}\left(R\log q+\log \sqrt{E}r-\frac\ve2\log B\right)\\&=\frac{2\sqrt{2}E^{\frac 32}}{3}\left(-\log K_\ve+\log \sqrt{E}r+\sqrt{\frac{\prod_i \log B_i}{\log T_{\uu{m}}}}+\frac\ve2\log B\right).
		\end{align*}
		However,  \eqref{eq:delta} and \eqref{eq:E32} combine to yield
		\begin{equation}\label{eq:knee}
			\log \Delta\leq \frac{2\sqrt{2}E^{\frac 32}}{3}\sqrt{\frac{\prod_i \log B_i}{\log T_{\uu{m}}}}\left(1+O\left(\sqrt{\frac{\log B}{Y}}\right)\right).
		\end{equation}
		Hence
		$$				\log \Delta
		\leq \frac{2\sqrt{2}E^{\frac 32}}{3}\left(\sqrt{\frac{\prod_i \log B_i}{\log T_{\uu{m}}}}
		+\frac\ve2\log B\right),
		$$
		which  contradicts the assumption $\sqrt{E}r>  K_\ve$.
	\end{proof}

	We now have everything in place to complete the proof of Theorems \ref{thm:side}
	and \ref{thm:side'}. 
	Let $\ve>0$ and 
	recall the notation $B'$ and $B$ from \eqref{eq:min-max}.
	Clearly, we may work under the assumption that $B\gg_\ve1$, for a sufficiently large implied constant that depends on $\ve$.
	
	We begin by dealing with the  case
	$K_\ve\leq1$, in the notation of \eqref{eq:defK}. (In particular, it will then  follow that 
	$KB^\ve\leq 1$ in \eqref{eq:defK'} and \eqref{eq:B1B'}.)
	In this case we fix $r=1$ and we choose  $Y\gg_\ve\log B$ such that \eqref{eq:BE} holds. Since $K_\ve \leq 1$, we deduce  from \eqref{eq:lid} that 
	$$
	R\log q\geq \sqrt{\frac{\prod_i \log B_i}{\log T_{\uu{m}}}}+\ve\log B.
	$$
	Let $\Delta$ be the determinant of an $E\times E$ minor of the matrix $\M$ defined in \eqref{eq:MM}. Here $r=1$,   so that $\uu{\eta}$ is the empty vector and 
	$$
	S_{1,\uu{\eta}}=\left\{
	\uu{x}\in X(\ZZ):
	|x_i|\leq B_i, ~g(\uu{x})\equiv 0\bmod q\right\}.
	$$
	The first few lines of the proofs of  Lemmas \ref{lem:inD} and \ref{lem:inD'} ensure that $q^\lambda\mid \Delta$,
	under the assumptions of 
	Theorems~\ref{thm:side}
	and \ref{thm:side'}, with 
	$$				\lambda=\frac{2\sqrt{2}RE^{\frac 32}}{3}\left(1+O\left(\sqrt{\frac{\log B}{Y}}\right)\right).
	$$
	Hence
	\begin{align*}
		\log \Delta 
		&\geq 
		\frac{2\sqrt{2}RE^{\frac 32}}{3}\left(1+O\left(\sqrt{\frac{\log B}{Y}}\right)\right)\log q\\
		&\geq \frac{2\sqrt{2}E^{\frac 32}}{3}\left(\sqrt{\frac{\prod_i \log B_i}{\log T_{\uu{m}}}}+\ve\log B\right)\left(1+O\left(\sqrt{\frac{\log B}{Y}}\right)\right),
	\end{align*}
	if  $\Delta\neq0$. This inequality is incompatible with the competing upper bound \eqref{eq:knee},
	if  $Y\gg_\ve \log B$ 
	for a sufficiently large implied constant. Hence $\Delta=0$ and
	we can construct an auxiliary polynomial 
	$f_{1}(\uu{x})=\sum_{\uu{e}\in\cE(Y)}a_{\uu{e}}\uu{x}^{\uu{e}}$, as in the proof of 
	Lemma~\ref{prop:pol}, which has all the properties required of it
	in the case $K_\ve\leq 1$.

	We are now ready to  deal 
	with the case $K_\ve\geq 1$. 
	If $\log \pi_X\ll \log B$ fails to  hold, then the desired conclusion follows from 
	\cite[Lemma~1.9]{salberger}. 
	Thus we may proceed under the assumption that $\log \pi_X\ll \log B$. But then  every assumption of Lemma \ref{prop:pol} holds and the proof is now identical to the one provided by Salberger \cite[Lemma~3.2]{salberger}.
	This completes the statement of 
	Theorem \ref{thm:side}
	and \ref{thm:side'}.
	
	\begin{remark}
		It is possible to prove a version of Theorem \ref{thm:side} in which the  assumption $\gcd(q,g(0,0))=1$ is replaced with the  weaker assumption 
		that $q$ is coprime to the content of $g$. 
		However, this would lead to a more complicated  
		expression for $K$ in  \eqref{eq:defK} and 
		will not be carried out here.
	\end{remark}

	\section{Counting points on quadric surfaces}\label{s:quadric}

	In this section we prove 
	Corollary \ref{thm:squares}, which concerns an upper bound for the counting function $N_1(B)$ defined in \eqref{eq:N1}.
	Let $S\subset \AAA^3$ be the quadric surface $$
	a_1x_1^2+a_2x_2^2+a_3x_3^2=n$$ 
	for $a_1,a_2,a_3,n\in \ZZ$ such that $-a_1a_2a_3n\neq \square$ and $\gcd(a_1,a_2,a_3,n)=1$.
	The condition 
	$-a_1a_2a_3n\neq \square$ precisely ensures that 
	$S$ contains no lines defined over $\QQ$. Indeed, if $X\subset \PP^3$ is the compactification of $S$, then $\Pic(X)\cong \ZZ^2$ if and only if $X(\QQ)\neq \emptyset$ and the underlying quadratic form has square determinant. Thus any line contained in $S$ cannot be defined over $\QQ$ and so contains at most one rational point.
	
	When $\max\{|n|,|\uu{a}|\}\gg B^{20}$, for a sufficiently large implied  constant, it follows from \cite[Lemma~5]{pila'} (which is based on an idea of Heath-Brown \cite[Thm.~4]{annal}),
	that there exists an auxiliary quadratic polynomial $q\in \ZZ[x_1,x_2,x_3]$, which is coprime to the polynomial defining $S$, and 
	such that the points counted by 
	$N_1(B)$ are also zeros of $q$. The intersection of $S$ with the quadric $q=0$ 
	produces at most $4$ irreducible curves and  Bombieri--Pila \cite[Thm.~5]{BP} implies that each irreducible curve  makes a contribution $O_\ve(B^{1/2+\ve})$, since any lines have only $O(1)$ points. 
	In this way it follows that 
	$$
	N_1(B)\ll_\ve B^{\frac{1}{2}+\ve} 
	$$
	if $\max\{|n|,|\uu{a}|\}\gg B^{20}$, which is satisfactory. We may therefore proceed under the assumption that $\max\{|n|,|\uu{a}|\}\ll B^{20}$.

	Suppose without loss of generality that $|a_1|=|\uu{a}|$. We shall rework Section 
	\ref{s:main} with 
	$q=|a_1|$ and $f(\uu{x})=a_1x_1^2+a_2x_2^2+a_3x_3^2-n$  and 
	$g(\uu{x})=a_2x_2^2+a_3x_3^2-n$. 
	Notice that $g(\uu{x})\equiv 0\bmod{q}$ for any $\uu{x}\in S(\ZZ)$.
	Moreover, we have 
	$$
	\gcd(q,g(0,0),g_0(1,0),g_0(0,1))=\gcd(a_1,n,a_2,a_3)=1. 
	$$
	Let $\pi_S=\abs{2a_1a_2a_3n}$ and note that the reduction modulo $p$ of $S$ is non-singular and geometrically integral for any $p\nmid \pi_S$. Moreover, we have $\log \pi_S\ll \log B$. 
	We shall take $B_1=B_2=B_3=B$ and 
	$\uu{m}=(2,0,0)$,
	noting that   $(2,0,0)$ is the maximum among the exponents appearing in $f$ with non-zero coefficient for the lexicographic order.
	
	The following result is extracted from work of Salberger \cite{salb-cubic} and will act as a proxy for  Lemma \ref{lem:p} in the present setting. 
	
	\begin{lemma}\label{lem:pv2}
		There exists $\nu\in \NN$ such that $r^\nu\mid \Delta$, with 
		$$
		\nu=E^{\frac 32}\left(1+O\left(E^{-1/2}\right)\right).
		$$
	\end{lemma}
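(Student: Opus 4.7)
The strategy is to adapt Salberger's refined global determinant method for low-degree surfaces from \cite{salb-cubic} (developed there for cubic surfaces) to the quadric $X \subset \AAA^3$ defined by $f=a_1x_1^2+a_2x_2^2+a_3x_3^2-n=0$. Whereas Lemma~\ref{lem:p} provides the general leading constant $\tfrac{2\sqrt{2}}{3}$ valid for any geometrically integral surface, the degree-$2$ structure of $X$ allows this constant to be sharpened to $1$.

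The key geometric input is that for a hypersurface of degree $d$ in $\AAA^3$, the degree-$k$ graded piece of $\ZZ[\uu{x}]/(f)$ has rank $\binom{k+2}{2} - \binom{k-d+2}{2}$, which for $d=2$ equals $2k+1$ instead of the full $\binom{k+2}{2}$. To exploit this at each prime $r_i \mid r$, I would first translate coordinates so that the chosen non-singular point $\uu{t}_i \in X(\F_{r_i})$ is the origin, whence every $\uu{x}_j \in S_{r,\uu{t}}$ satisfies $\uu{x}_j \equiv \uu{0} \pmod{r_i}$ and the trivial divisibility $r_i^{|\uu{e}|} \mid \uu{x}_j^{\uu{e}}$ holds for each column index $\uu{e} \in \cE(Y)$. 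The gain comes from the relation $f(\uu{x}_j)=0$: for each degree $k$, there are $\binom{k+2}{2}-(2k+1)$ independent relations among the monomials of total degree $k$ modulo $f$, each yielding a column operation on $\M$ which leaves $\Delta$ unchanged but replaces the column $\uu{x}^{\uu{e}}$ by a $\ZZ$-linear combination whose entries carry an extra factor of $r_i$. This is exactly the mechanism exploited in \cite[\S4]{salb-cubic}, where the argument is worked out for cubic surfaces but applies verbatim to a hypersurface of any fixed low degree, the only ingredient being the rank of the graded pieces of the coordinate ring.

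Summing the $r_i$-adic contributions over all $\uu{e} \in \cE(Y)$ gives a lower bound for $v_{r_i}(\Delta)$, and summing over $i \leq u$ produces $\nu$ with $r^\nu \mid \Delta$. The asymptotic arithmetic uses \eqref{eq:defE}, \eqref{eq:BE} and \eqref{eq:E32}, together with counts over $\cE(Y)$ of the type already carried out in the proof of Lemma~\ref{lem:ineqlambda2}; for a quadric, this evaluates to $E^{3/2}(1+O(E^{-1/2}))$, the improved leading constant $1$ reflecting that roughly half of the $\binom{k+2}{2}$ degree-$k$ monomials lie in the image of multiplication by $f$ for large $k$. The main obstacle is the careful bookkeeping of the secondary error terms — boundary contributions from small $|\uu{e}|$, the deviation of $\#\cE(Y)$ from its leading asymptotic, and the finite range $k<2$ where the exact graded rank differs from $2k+1$ — and showing that their total contribution is absorbed into $O(E)$, giving the uniform remainder $O(E^{-1/2})$ relative to the main term.
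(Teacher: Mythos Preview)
Your proposal is correct and follows the same route as the paper: both invoke Salberger's refined divisibility estimate for low-degree surfaces from \cite{salb-cubic} to upgrade the leading constant from $\tfrac{2\sqrt{2}}{3}$ to $1$. The paper's own proof is terser than yours---it simply quotes the remark after Lemma~16.11 in \cite{salb-cubic} (rather than \S4) to obtain $\nu=n^3+O(n^2)$, and then computes $E=Q(n)=\binom{n+3}{3}-\binom{n+1}{3}=n^2+O(n)$ directly from the combinatorics of $\cE(Y)$ in the equal-box case $B_1=B_2=B_3=B$, $Y=n\log B$, rather than going through the general asymptotics \eqref{eq:defE}--\eqref{eq:E32}; but the substance is identical.
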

	
	\begin{proof}
		When $B_1=B_2=B_3=B$, it is natural to take  $Y=\ell \log B$ in  
		the definition of $\mathcal{E}(Y)$, where $\ell$ is the maximum allowable degree of 
		the  monomials indexed by the set.
		In particular, we have $E=\#\mathcal{E}(Y)=Q(\ell)$, where
		$$
		Q(\ell)=\binom{\ell+3}{3}-\binom{\ell+1}{3}=\ell^2+O(\ell).
		$$
		On taking $N=3$ in the  remark after 
		Lemma~16.11 in  \cite{salb-cubic}, we deduce that $r^\nu\mid \Delta$, with 
		$
		\nu=\ell^3 +O(\ell^{2})$.  
		The statement of the lemma easily follows. 
	\end{proof}
	
	The next result is a refinement of 
	Lemma \ref{lem:otherprimes}.

	\begin{lemma}\label{lem:otherprimesv2}
		Let $p\nmid \pi_S$. There exists $N_p\in \NN$ such that $p^{N_p}\mid \Delta$, with 
		$$N_p\geq \frac{E^{3/2}}{p}+O\left(E+E^{3/2}p^{-3/2}\right).
		$$
	\end{lemma}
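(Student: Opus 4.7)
The plan is to adapt Salberger's $p$-adic determinant analysis from \cite[Lemmas~1.4,~1.5]{salberger} to the more favourable setting of quadric surfaces, exploiting the low-degree refinement from \cite{salb-cubic} that was already invoked in the proof of Lemma~\ref{lem:pv2}. Since $B_1=B_2=B_3=B$, the natural parameter is $Y=n\log B$, and the monomial set $\cE(Y)$ has cardinality $E=Q(n)=n^2+O(n)$, matching the setup of Lemma~\ref{lem:pv2}. For $p\nmid \pi_S$, the reduction of $S$ modulo $p$ is smooth and geometrically integral, so $\#S(\FF_p)=p^2+O(p)$ by the elementary count for quadrics.

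The core of the argument is to partition the integral points on $S$ according to their residue class modulo $p$. For each $\bar{\uu{x}}\in S(\FF_p)$, choose a lift and Taylor-expand the coordinates of each integral point reducing to $\bar{\uu{x}}$; standard manipulations show that the $E\times E$ sub-minor of $\M$ (restricted to points in this class) has determinant divisible by a high power of $p$, whose exponent is a combinatorial quantity attached to $\cE(Y)$ together with the dimension of $S$. Summing these local contributions across all $\FF_p$-residue classes produces the estimate for $N_p$. Using the sharp count $\#S(\FF_p)=p^2+O(p)$ for a smooth quadric in place of the cruder upper bound available for a general surface of higher degree is exactly what upgrades the leading constant from $2\sqrt{2}/3$ to $1$; concretely this is the $N=3$, $d=2$ case of the remark following \cite[Lemma~16.11]{salb-cubic}, already employed in Lemma~\ref{lem:pv2}.

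The main obstacle, and essentially the only one, is bookkeeping the improved constant. In the general-degree case of Lemma~\ref{lem:otherprimes}, the factor $2\sqrt{2}/3$ arises from volume estimates for the monomial set used in the higher-degree determinant method. For quadrics the combinatorics collapses to $E=n^2+O(n)$ and the local calculation sharpens correspondingly, so that $N_p\geq E^{3/2}/p+O(E+E^{3/2}p^{-3/2})$ with coefficient exactly $1$. Once this refinement is extracted from the $p$-local analogue of the argument in \cite{salb-cubic}, the error terms $O(E+E^{3/2}p^{-3/2})$ are controlled exactly as in the proof of Lemma~\ref{lem:otherprimes}, and the statement follows.
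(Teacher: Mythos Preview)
Your proposal is correct and follows essentially the same approach as the paper: both invoke Salberger's argument from \cite[Lemmas~1.4 and~1.5]{salberger} and upgrade the leading constant via the remark after \cite[Lemma~16.11]{salb-cubic}, which applies because the reduction of $S$ modulo $p$ is non-singular for $p\nmid\pi_S$. The paper's proof is a two-line citation to exactly these sources, while you have unpacked the mechanism (partition by $\FF_p$-residue classes, local $p$-adic divisibility) in more detail; the content is the same.
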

	
	\begin{proof}
		The proof is identical to the one provided by \cite[Lemmas~1.4 and  1.5]{salberger}, 
		but 	modified to 
		account for the new lower bound for the exponent that follows from  the remark after 
		\cite[Lemma 16.11]{salb-cubic}, which is valid since the reduction of $S$ modulo $p$ is non-singular for any $p\nmid \pi_S$.
	\end{proof}

	In what follows we take 
	\begin{equation}\label{eq:rice}
		K'=q^{-\frac{1}{6}} B^{\frac{2}{3}},
	\end{equation}
	recalling that $q=|\uu{a}|$.
	Let  $K_\ve'=K'B^{\ve}$.
	We are now ready to  prove our analogue of Lemma \ref{prop:pol} using the new bounds.
	
	\begin{lemma}\label{prop:polv2}
		Let $\ve>0$ and assume that $K'_\ve\geq 1$.
		Assume that  $B\gg_\ve1$, for a sufficiently large implied constant depending only on $\ve$.
		Let $r_1<\dots< r_u$ be a sequence of primes that are coprime with $q$ and such that the reduction modulo $r_i$ of $S$ is geometrically irreducible. 
		Let $r$ be given by \eqref{eq:def-r} and  assume that $\log r\ll \log B$. 
		For each $1\leq i\leq u$, let $\uu{\eta}_i\in S(\F_{r_i})$ be a non-singular point of the reduction modulo $r_i$ of $S$.
		Then there exists a polynomial $f_{\uu{\eta}}\in \ZZ[\uu{x}]$, which is coprime with $f$,  such that the points in $S_{r,\uu{\eta}}$ are all zeros of $f_{\uu{\eta}}$. Moreover, the polynomial has degree $O_\ve(
		\max\{1,K'_\ve/r\})$. 
	\end{lemma}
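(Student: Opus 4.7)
The plan is to run the proof of Lemma \ref{prop:pol} almost verbatim, but with the sharper divisibility bounds supplied by Lemmas \ref{lem:pv2} and \ref{lem:otherprimesv2}, in which the factor $\tfrac{2\sqrt{2}}{3}$ in front of $E^{3/2}$ has been replaced by $1$. First I would take $Y$ to be the smallest positive real number with $Y/\log B$ exceeding a suitable constant depending only on $\ve$ and with $\sqrt{E}\,r > K'_\ve$, where $E = \#\cE(Y)$. In the quadric setting $B_1 = B_2 = B_3 = B$, $\uu m = (2,0,0)$, $T_{\uu m} = B^2$ and $S = 2\log B$, so \eqref{eq:defE} gives $Y \asymp \sqrt{E}\log B$, and a short computation shows that \eqref{def:R} evaluates to $R = 1/(4\sqrt{2})$, whence $\tfrac{2\sqrt{2}}{3}R = \tfrac{1}{6}$.

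Next, I would consider the determinant $\Delta$ of an $E \times E$ minor of $\M$ from \eqref{eq:MM} and argue by contradiction that $\Delta = 0$. Lemma \ref{lem:pv2} gives $r^\nu \mid \Delta$ with $\nu = E^{3/2}(1 + O(E^{-1/2}))$; Lemma \ref{lem:otherprimesv2} summed over primes $p \leq \sqrt{E}$ with $p \nmid qr\pi_S$ contributes $E^{3/2}\log\sqrt{E} + o(E^{3/2}\log B)$ to $\log \Delta$ via the Mertens estimate of \cite[Lemma~1.10]{salberger} combined with $\log \pi_S \ll \log B$; and Lemmas \ref{lem:mue}--\ref{lem:ineqlambda2}, applied as in the proof of Lemma \ref{lem:inD'} with $\uu t \in \{(0,0,0),(0,2,0),(0,0,2)\}$ chosen, for each prime-power factor of $q = |a_1|$, so that the relevant coefficient in $g$ is a unit modulo that prime (possible because $\gcd(a_1,a_2,a_3,n) = 1$), give $q^\lambda \mid \Delta$ with $\lambda = \tfrac{E^{3/2}}{6}(1 + o(1))$. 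Combining these contributions and absorbing all error terms into $\tfrac{\ve}{2}E^{3/2}\log B$ using $Y \gg_\ve \log B$ and $\log r \ll \log B$ yields
\[
\log \Delta \geq E^{3/2}\left(\log r + \log\sqrt{E} + \tfrac{1}{6}\log q - \tfrac{\ve}{2}\log B\right).
\]

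This contradicts the upper bound from \eqref{eq:knee}, which specialises here to $\log \Delta \leq \tfrac{2 E^{3/2}\log B}{3}(1 + o(1))$, precisely when $r\sqrt{E}\,q^{1/6} > B^{2/3 + \ve}$, i.e.\ when $r\sqrt{E} > K'_\ve$, which holds by the choice of $Y$. Therefore $\Delta = 0$ for every $E \times E$ minor, so $\rank \M < E$ and a nonzero $\ZZ$-linear combination $f_{\uu t}(\uu x) = \sum_{\uu e \in \cE(Y)} a_{\uu e} \uu x^{\uu e}$ vanishes on all points of $S_{r,\uu t}$; coprimality of $f_{\uu t}$ with $f$ is supplied by the monomial-maximality argument preceding \eqref{eq:defE}. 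The minimality of $Y$ together with $Y \asymp \sqrt{E}\log B$ yields $Y = O_\ve(\max\{1,K'_\ve/r\}\log B)$, so $\deg f_{\uu t} \leq \lfloor Y/\log B\rfloor = O_\ve(\max\{1,K'_\ve/r\})$, as required.

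The only real obstacle is the numerical bookkeeping: one must verify that upgrading the leading coefficient of $E^{3/2}$ from $\tfrac{2\sqrt{2}}{3}$ to $1$ in the $r$-adic and $p$-adic contributions, while the $q$-adic contribution remains $\tfrac{E^{3/2}\log q}{6}$ and the upper bound remains $\tfrac{2 E^{3/2}\log B}{3}$, produces exactly the improved balance $r\sqrt{E} > B^{2/3}q^{-1/6}$ that defines $K'$ in \eqref{eq:rice}, rather than the weaker balance that a direct specialisation of Lemma \ref{prop:pol} would supply.
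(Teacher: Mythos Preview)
Your proposal is correct and follows essentially the same approach as the paper's proof: choose $Y$ minimal subject to $\sqrt{E}\,r > K'_\ve$, combine the $q$-adic input $\lambda = \tfrac{1}{6}E^{3/2}(1+o(1))$ from Lemmas~\ref{lem:mue}--\ref{lem:ineqlambda2} with the improved $r$- and $p$-adic inputs from Lemmas~\ref{lem:pv2}--\ref{lem:otherprimesv2}, and contradict the upper bound \eqref{eq:knee}. The only cosmetic differences are that the paper works directly with $Y=n\log B$ for integer $n$ and $E=Q(n)=n^2+O(n)$ rather than via \eqref{eq:defE}, and it makes explicit the use of $\log q\ll\log B$ (available since $|\uu a|\ll B^{20}$ in Section~\ref{s:quadric}) when absorbing the error terms.
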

	\begin{proof}
		The proof is similar to  Lemma \ref{prop:pol}, but using Lemmas \ref{lem:pv2} and \ref{lem:otherprimesv2} instead of Lemmas \ref{lem:p} and \ref{lem:otherprimes}. We present the argument in full for the sake of clarity. 
		Since $B_1=B_2=B_3=B$ we are working with $Y=n\log B$ for a large parameter $n$. 
		We assume that $n\gg_\ve 1$ is the  smallest integer such that 
		$ \sqrt{E}r> K_\ve'$, where $E=Q(n)=n^2+O(n)$, as in the proof of Lemma \ref{lem:pv2}.
		
		Let $\Delta$ be the determinant of an $E\times E$ minor of the matrix $\M$ as defined in \eqref{eq:MM}. 
		If $\Delta=0$ for all $E\times E$ minors,
		then $\rank\M< E$ and there is a $\ZZ$-linear combination $f_{\uu{\eta}}(\uu{x})=\sum_{\uu{e}\in\cE(Y)}a_{\uu{e}}\uu{x}^{\uu{e}}$ of monomials in $\cE(Y)$, such that 
		the points in $S_{r,\uu{\eta}}$ all vanish on  $f_{\uu{\eta}}$. 
		Moreover, $f_{\uu{\eta}}$ will be coprime to $f$ and our  choice of $Y$ ensures that 
		$$
		1<\frac{ \sqrt{E}r}{K_\ve'}=n\left(1+O\left(\frac{1}{n}\right)\right)\frac{r}{K_\ve'}.$$
		Since $n$ is defined to be the smallest positive integer satisfying the constraints imposed in its definition, so it follows that 
		$
		\deg f_{\uu{\eta}} \leq n\ll_{\ve}\max\{1,K_\ve'/r\},
		$
		as claimed.
		
		It remains to consider the possibility that $\Delta\neq 0$ for an $E\times E$ minor.		
		In this case we 
		wish to apply the variant of 
		Lemma  \ref{lem:inD'} tailored to the situation at  hand.  We claim that 
		\begin{align*}
			\log \Delta&\geq 
			E^{3/2}\left(\frac{\log q}{6}+\log \sqrt{E}r-\frac\ve2\log B\right).
		\end{align*}
		To see this, we first apply 
		Lemma
		\ref{lem:ineqlambda2} to deduce that 
		$q^{\lambda}\mid \Delta$, with 
		$$
		\lambda=\frac{E^{3/2}}{6}\left(1+O\left(E^{-1/4}\right)\right),
		$$
		since $R=1/(4\sqrt{2})$ in \eqref{def:R} and $n\gg \sqrt{E}$.
		Combining this with 
		Lemmas \ref{lem:pv2} and~\ref{lem:otherprimesv2}, and then with the modified version of
		Lemma \ref{lem:sumother}, 
		we therefore deduce that 
		\begin{align*}
			\log \Delta&\geq \lambda \log q+\nu\log r+\sum_{\substack{p\ll E^{1/2}\\p\nmid qr\pi_S}}N_p\log p\\
			&\geq 	E^{3/2}\left(\frac{\log q}{6}+\log \sqrt{E}r\right) -E_1,	
		\end{align*}
		for some $E_1$ satisfying
		$$
		0\leq E_1\ll_\ve  E^{3/2}\left(E^{-1/4}\log q+E^{-1/2}\log r+\log\log 3qr\pi_S
		\right)
		.
		$$
		Clearly $E_1< \frac{\ve}{2}E^{3/2}\log B$ if $n\gg_\ve 1$, since  $\log q\ll \log B$ and 	
		we have assumed that $\log r\pi_S\ll\log B$.
		This establishes the claim. However, it follows from \eqref{eq:knee} that 
		$$				\log \Delta 
		\leq \frac{2E^{\frac 32}}{3}\left(\log B
		+\frac\ve2\log B\right),
		$$
		which  contradicts the assumption $\sqrt{E}r>  K_\ve'$.
	\end{proof}

	We are now ready to  conclude the proof of Corollary \ref{thm:squares}. Recall the definition 
	\eqref{eq:rice} of $K'$, in which we recall that $q=|\uu{a}|$.
	Our aim is to establish the existence of polynomials
	$f_1,\ldots, f_J\in \ZZ[x_1,x_2,x_3]$,  
	and a finite collection of points $Z\subset S$, such that 
	\begin{enumerate}
		\item $J=O_{\ve}(\max\{1,K'B^\ve\})$;
		\item
		each $f_j$ is coprime to $a_1x_1^2+a_2x_2^2+a_3x_3^2-n$ and has degree 
		$O_{\ve}(1)$, for $j\leq J$; 
		\item
		$\#Z=O_{\ve}(\max\{1,K'B^\ve\}^{2})$; and 
		\item
		for each $\uu{x}$ counted by $N_1(B)$, with $\uu{x}\not\in  Z$, 
		there exists $j\leq J$ such that
		$f_j(x_1,x_2,x_3)=0.
		$
	\end{enumerate}
	If $K'B^\ve\leq 1$ then the proof is identical to the corresponding case treated in the proof of  Theorem \ref{thm:side'}. Alternatively, if $K'B^\ve\geq 1$ 
	then the same proof goes through, making sure to use Lemma 
	\ref{prop:polv2}
	instead of Lemma~\ref{prop:pol}. 
	
	
	It remains to  handle the points on the $J$ curves contained in $S$.
	By Bombieri--Pila \cite[Thm.~5]{BP}, each irreducible curve  makes a contribution $O_\ve(B^{1/2+\ve})$, since the condition 
	$-a_1a_2a_3n\neq \square$ ensures that any lines contain $O(1)$ points. 
	Thus the overall contribution from the curves is $O_\ve(\max\{1,K'\}B^{1/2+2\ve})$ and 
	it follows that 
	$$
	N_1(B)\ll_\ve
	\frac{B^{\frac{4}{3}+\ve}}{|\uu{a}|^{1/3}}+\frac{B^{\frac{7}{6}+\ve}}{|\uu{a}|^{1/6}}+B^{\frac{1}{2}+\ve}.
	$$
	on redefining $\ve$.
	We claim that the first term can be dropped from this expression, which thereby yields 
	the statement of 
	Corollary \ref{thm:squares}. If the first term is less than second term we are done. Hence we may proceed under the assumption that the first term is greater than or equal to the second, which is equivalent to demanding that $B\geq |\uu{a}|$. But in this case we merely apply the divisor function bound 
	\eqref{eq:trivial}, which is less than the second term when 	$B\geq |\uu{a}|$.

	\section{Sums of unlike powers}\label{s:unlike}
	
	In this section we  prove  Corollary \ref{thm:kl}. In fact we shall begin by proving a more general result.

	\subsection{Counting points on affine threefolds}
	Consider the threefold $X\subseteq \AAA^4$ defined by the polynomial
	\begin{equation}\label{eq:3fold}
		f(x_1,x_2,x_3,x_4)=(\alpha x_1+\beta x_4+\gamma)h(x_1,x_2,x_3,x_4)+g(x_2,x_3),
	\end{equation}
	where $\alpha,\beta,\gamma\in \ZZ$ and $g,h$ are polynomials with integer coefficients, 
	with   $\deg(g)=l$ and 
	$\deg(h)=\deg_{x_1}(h)=k$. We further assume that $\beta\neq 0$. 
	We let  $X^\circ$ be the set of  points in $X$ that do not belong to a rational line or a rational conic contained in  $X$. We will use Theorem \ref{thm:side'} to bound the quantity
	$
	N^\circ(B)=\#\{\uu{x}\in X^\circ(\ZZ): |\uu{x}|\leq B\},$
	as follows.
	
	\begin{theorem}\label{thm:bound}
		Assume that the surface $X\cap \{\alpha x_1+\beta x_4+\gamma=q\}$ is geometrically irreducible
		for any non-zero $q\in \ZZ$.
		Assume that  $\gcd(g(0,0),g_0(1,0), g_0(0,1))=1$, where $g_0$ is the top degree part of $g$.
		Then, for any $\ve>0$, we have 
		$$
		N^\circ(B)\ll_\ve \max\{|\alpha|,|\beta|,|\gamma|\}B^\ve\left( B^{\frac{4}{3}+\frac{1}{\sqrt{k}}(1-\frac{1}{2l})}+B^{1+\frac{2}{\sqrt{k}}(1-\frac{1}{2l})}\right).
		$$
	\end{theorem}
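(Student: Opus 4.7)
The plan is to slice the threefold $X$ by the family of hyperplanes $\alpha x_1+\beta x_4+\gamma=s$, parametrised by $s\in\ZZ$. Since $\beta\neq 0$, on each slice one eliminates $x_4=(s-\gamma-\alpha x_1)/\beta$ and obtains a surface $X_s\subset\AAA^3$ cut out by the integer polynomial
\[
F_s(x_1,x_2,x_3):=s\cdot h_s(x_1,x_2,x_3)+\beta^k g(x_2,x_3),
\]
where $h_s(x_1,x_2,x_3):=\beta^k h(x_1,x_2,x_3,(s-\gamma-\alpha x_1)/\beta)\in\ZZ[x_1,x_2,x_3]$ has total degree $k$. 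The relation $F_s=0$ forces $s\mid\beta^kg(x_2,x_3)$, so that
\[
g(x_2,x_3)\equiv 0\pmod{q_s},\qquad q_s:=s/\gcd(s,\beta^k),
\]
which is precisely the side condition of Theorem \ref{thm:side'}. Every integer point of $X^\circ$ with $|\ux|\leq B$ lies on exactly one slice with $|s|\leq S_0=O(\max\{|\alpha|,|\beta|,|\gamma|\}B)$, so the count is split over this range.

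The slice $s=0$ contributes nothing to $N^\circ(B)$: combined with $f=0$ one has $g(x_2,x_3)=0$, and for each fixed $(x_2,x_3)$ with $g(x_2,x_3)=0$ the affine line $\{\alpha x_1+\beta x_4+\gamma=0\}\subset\AAA^2_{x_1,x_4}$ sits entirely inside $X$. Every such integer point therefore lies on a rational line of $X$ and is excluded.

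For each $s\neq 0$, the hypothesis ensures that $F_s$ is absolutely irreducible, and the assumption $\gcd(g(0,0),g_0(1,0),g_0(0,1))=1$ immediately yields the condition \eqref{eq:gcdq} for $q_s$. Taking the lexicographic order with $x_1\succ x_2\succ x_3$, the top monomial of $F_s$ is $x_1^k$, whose coefficient equals $s\cdot\sum_{a=0}^k c_{a,0,0,k-a}(-\alpha)^{k-a}\beta^a$, built from the ``$(x_1,x_4)$-part'' of $h$; assuming this constant is nonzero (the generic situation, and the one arising in Corollary \ref{thm:kl}), one may take $\uu{m}=(k,0,0)$, so that \eqref{eq:B1B'} gives
\[
K_s=B^{1/\sqrt{k}}\,q_s^{-1/(2\sqrt{k}\,l)}.
\]
Theorem \ref{thm:side'} then produces $O_\ve(K_sB^\ve)$ auxiliary polynomials of degree $O_\ve(1)$ together with an exceptional set $Z_s$ of cardinality $O_\ve(K_s^2B^{2\ve})$.

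Each intersection $V(F_s)\cap V(f_j)$ is a curve on $X_s\subset X$ of total degree $O_\ve(1)$: any irreducible component of degree $1$ or $2$ is a rational line or conic of $X$, hence excluded from $X^\circ$ by definition; any component of degree $\geq 3$ contributes at most $O_\ve(B^{1/3+\ve})$ integer points with $|\ux|\leq B$ via generic projection to the plane and Bombieri--Pila \cite{BP}. The slice-count is thus $O_\ve(K_s^2B^\ve+K_sB^{1/3+\ve})$. Stratifying $|s|\leq S_0$ by $d=\gcd(s,\beta^k)\mid\beta^k$ and using the divisor bound $\tau(\beta^k)\ll_\ve|\beta|^\ve$, the geometric series $\sum_sK_s^2$ and $\sum_sK_sB^{1/3+\ve}$ combine (after absorbing $|\beta|^\ve$ into an enlarged $B^\ve$) to give respectively $B^{1+(2/\sqrt{k})(1-1/(2l))+\ve}$ and $B^{4/3+(1/\sqrt{k})(1-1/(2l))+\ve}$, each multiplied by $\max\{|\alpha|,|\beta|,|\gamma|\}$. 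The main obstacles are the $s=0$ analysis (settled by the fibre-wise line argument), the non-vanishing of the $x_1^k$-coefficient of $F_s$ (a mild algebraic requirement, satisfied in the setting of Corollary \ref{thm:kl}), and the uniform-in-$s$ $B^{1/3+\ve}$ per-curve bound for the higher-degree components of $V(F_s)\cap V(f_j)$.
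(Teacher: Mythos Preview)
Your proposal is correct and follows the same approach as the paper: slice by the value of $\alpha x_1+\beta x_4+\gamma$, apply Theorem~\ref{thm:side'} on each nonzero slice with the induced side condition on $g(x_2,x_3)$, bound each auxiliary curve by $B^{1/3+\ve}$ via Bombieri--Pila, and sum using the divisor stratification (which is exactly the content of the paper's Lemma~\ref{lemma:tec}). Your handling of the slice $s=0$ is in fact cleaner than the paper's: you observe that every such point lies on a rational line in $X$ and is therefore excluded from $X^\circ$ outright, whereas the paper bounds this contribution by $B^{4/3+\ve}$; you are also more careful than the paper in flagging the nonvanishing of the $x_1^k$-coefficient of $F_s$, which the paper simply asserts from $\deg(h)=\deg_{x_1}(h)=k$.
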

	
	Before proving the theorem, we need a result from elementary number theory. 
	\begin{lemma}\label{lemma:tec}
		Given $-1<\alpha<0$ and $\ve>0$, we have
		$$
		\sum_{u\leq X}(u/\gcd(u,n))^\alpha\ll_\ve X^{\alpha+1}n^\ve.
		$$
	\end{lemma}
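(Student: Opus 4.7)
The plan is to split the sum according to the value of $d = \gcd(u,n)$, which is a divisor of $n$. Writing $u = dm$, the coprimality condition becomes $\gcd(m, n/d) = 1$, and crucially $u/\gcd(u,n) = m$. Hence the sum rewrites as
$$
\sum_{u\leq X}\left(\frac{u}{\gcd(u,n)}\right)^\alpha = \sum_{\substack{d\mid n\\ d\leq X}} \sum_{\substack{m\leq X/d\\ \gcd(m,n/d)=1}} m^\alpha.
$$

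Next I would drop the coprimality condition (legal because $\alpha < 0$ makes $m^\alpha$ positive and the partial sum only increases by taking all $m$) and use the elementary comparison with an integral. Since $-1 < \alpha < 0$, summation by parts (or a crude integral estimate) gives $\sum_{m\leq Y} m^\alpha \ll Y^{\alpha+1}$, with an absolute implied constant. Substituting $Y = X/d$ yields
$$
\sum_{\substack{d\mid n\\ d\leq X}} (X/d)^{\alpha+1} \;=\; X^{\alpha+1}\sum_{\substack{d\mid n\\ d\leq X}} d^{-(\alpha+1)}.
$$

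Finally, since $0 < \alpha+1 < 1$, we have $d^{-(\alpha+1)} \leq 1$ for every $d \geq 1$, so the divisor sum is at most $\tau(n)$, and the standard bound $\tau(n) \ll_\ve n^\ve$ finishes the proof. There is no real obstacle here; the only mild care required is to check that the constant absorbed into the $\ll_\ve$ notation genuinely depends only on $\ve$ (and not on $\alpha$), which is immediate provided one treats $\alpha$ as fixed in the statement, as the hypothesis does.
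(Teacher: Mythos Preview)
Your proof is correct and follows essentially the same route as the paper: both split the sum over divisors $d\mid n$ with $d=\gcd(u,n)$, drop the coprimality condition, apply the integral comparison $\sum_{m\le Y}m^\alpha\ll Y^{\alpha+1}$, and finish with the divisor bound $\tau(n)\ll_\ve n^\ve$. Your write-up is slightly more explicit about the change of variable $u=dm$ and about why $d^{-(\alpha+1)}\le 1$, but there is no substantive difference.
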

	\begin{proof}We have
		\begin{align*}
			\sum_{u\leq X}(u/\gcd(u,n))^\alpha&=\sum_{d\mid n}\sum_{\substack{u\leq X\\\gcd(u,n)=d}}(u/d)^\alpha \\
			&\leq \sum_{d\mid n}\sum_{\substack{u\leq X/d}}u^\alpha \\
			&\ll \sum_{d\mid n} \left(\frac{X}{d}\right)^{\alpha+1}\\
			&\ll n^\ve X^{\alpha+1},
		\end{align*}
		by the divisor  bound.
	\end{proof}
	\begin{proof}[Proof of Theorem \ref{thm:bound}]
		Let $L=\max\{|\alpha|,|\beta|,|\gamma|\}$ and let  $u\in \ZZ$. We write 
		$$
		N_u^\circ(B)=\#\{\uu{x}\in X^\circ(\ZZ): |\uu{x}|\leq B, 
		\alpha x_1+\beta x_4+\gamma=u\}.
		$$
		We have  $N_u^\circ(B)=0$ unless  $|u|\leq  3LB$, whence
		\begin{equation}\label{eq:ink}
			N^\circ(B)\leq \sum_{\abs{u}\leq 3LB} N_u^\circ(B).
		\end{equation}
		The points in $X$ with  $u=0$ are the points in $\{\alpha x_1+\beta x_4+\gamma=0\}\cap \{g(x_2,x_3)=0\}$. 
		We can estimate the number of relevant $(x_2,x_3)$ using Bombieri--Pila \cite[Thm.~5]{BP}. There are $O_\ve(B^{1/3+\ve})$ points that contribute, as we are not interested in points that lie on rational lines or conics. Since there are $O(B)$ choices of $(x_1,x_4)$ such that 
		$\alpha x_1+\beta x_4+\gamma=0$, this therefore leads to the bound
		\begin{equation}\label{eq:ink'}
			N_0^\circ(B)\ll_\ve B^{4/3+\ve}.
		\end{equation}
		
		Next assume that  $u\neq 0$. It will be convenient to put $\deg_{x_4}(h)=k'$ and  to write
		$$
		h_u(x_1,x_2,x_3)=\beta^{k'}h(x_1,x_2,x_3,(u-\alpha x_1-\gamma)/\beta).
		$$ 
		Since  $\beta\neq 0$, we see that $h_u$ is a  polynomial with integer coefficients and 
		$$
		\beta^{k'}h(x_1,x_2,x_3,x_4)=h_u(x_1,x_2,x_3),
		$$ 
		for any $\uu{x}$ counted by $N_u^\circ(B)$.
		Let $X_u\subseteq \AAA^3$ be the surface defined by the equation
		$$
		uh_u(x_1,x_2,x_3)+\beta^{k'}g(x_2,x_3)=0.
		$$
		This is geometrically irreducible by   assumption.
		Moreover,
		$$
		N_u^\circ(B)\leq\#\{\uu{x}\in X_u^\circ(\ZZ): |\uu{x}|\leq B\},
		$$
		where  $X_u^\circ$ is  the set of  points in $X_u$ that do not belong to any  rational lines or conics in 
		the surface. 
		Put $q=|u|/\gcd(u,\beta^{k'})$ and note that 
		$g(x_2,x_3)\equiv 0\bmod u$ for any $(x_1,x_2,x_3)\in X_u(\ZZ)$. 
		Moreover, the hypothesis \eqref{eq:gcdq} follows from the assumptions  of the theorem. 
		Thus 
		$$
		N_u^\circ(B)\leq \#\{\uu{x}\in X_u^\circ(\ZZ): |\uu{x}|\leq B,~g(x_2,x_3)\equiv 0\bmod q\}
		$$
		and we can apply Theorem \ref{thm:side'} to estimate this quantity.

		We can take $\uu{m}=(k,0,0)$ since $\deg (h)=\deg_{x_1} (h)=k$. Indeed, $(k,0,0)$ is the  maximum among the exponents appearing in the polynomial defining $X_{u}$ for the lexicographic order.
		Thus 
		$
		K=B^{\frac{1}{\sqrt{k}}}q^{-\frac{1}{2\sqrt{k}l}}
		$
		in \eqref{eq:B1B'}.
		Then, for any $\ve>0$, 
		it follows that there is a set of 
		$J=O_\ve(\max\{1,KB^\ve\})$ geometrically 
		irreducible curves $C_1,\dots,C_J\subset X_u$, with  $\deg C_j= O_\ve(1)$, such that 
		there are $O_\ve(\max\{1,KB^\ve\}^2)$  points counted by 
		$N_u^\circ(B)$ that are not  contained in  any of these curves. 
		When $\deg C_j\in \{1,2\}$, furthermore, we may assume that the relevant curve contains 
		$O(1)$ integer points, since we are not interested in lines or conics  defined over $\QQ$.
		The overall  contribution from the curves is therefore $O(JB^{1/3+\ve})$, by 
		\cite[Thm.~5]{BP}, which thereby shows that 
		\begin{align*}
			N_u^\circ(B)&\ll_\ve 
			\max\{1,KB^\ve\}B^{\frac 13+\ve}+\max\{1,KB^\ve\}^2\\
			&\leq  B^{\frac 13+\frac{1}{\sqrt{k}}+2\ve}q^{-\frac{1}{2\sqrt{k}l}}+B^{\frac{2}{\sqrt{k}}+2\ve}q^{-\frac{1}{\sqrt{k}l}}+B^{\frac 13+\ve}.
		\end{align*}
		Combining this with \eqref{eq:ink'} in \eqref{eq:ink}, we conclude that 
		\begin{align*}
			N^\circ(B)
			&\ll_\ve LB^{\frac 43+\ve}+\sum_{\substack{0< u\leq3 LB
			}} \left(
			\frac{
				B^{\frac 13+\frac{1}{\sqrt{k}}+2\ve}}{(
				u/\gcd(u,\beta^{k'}))^{\frac{1}{2\sqrt{k}l}}}+\frac{B^{\frac{2}{\sqrt{k}}+2\ve}}{
				(
				u/\gcd(u,\beta^{k'}))^{\frac{1}{\sqrt{k}l}}}\right).
		\end{align*}
		The statement of the theorem now follows from   Lemma \ref{lemma:tec} and redefining the choice of $\ve$.
	\end{proof}
	
	\subsection{Proof of  Corollary \ref{thm:kl}}
	We are now ready to prove
	Corollary \ref{thm:kl}.  Let $N\in \ZZ$ be non-zero. 
	Since $k$ is assumed to be odd, we can write
	$$
	x_1^k+x_2^{l}+x_3^{m}+x_4^k-N=(x_1+x_4)h(x_1,x_4)+x_2^{l}+x_3^{m}-N,
	$$
	for a polynomial $h\in \ZZ[x_1,x_4]$ of degree  $k-1$. 
	Let $$
	f(\uu{x})=(x_1+x_4)h(x_1,x_4)+x_2^{l}+x_3^{m}-N.
	$$
	This is a polynomial of the form \eqref{eq:3fold}, with $g(x_2,x_3)=x_2^{l}+x_3^{m}-N$.
	In particular, $$\gcd(g(0,0),g_0(1,0),g_0(0,1))=1.$$
	We need to check that $X_u=X\cap\{x_1+x_4=u\}$ is geometrically irreducible for all non-zero $u\in \ZZ$. To do so, we pick $n\in \QQ$ such that $M=N-uh(n,u-n)$ is non-zero.
	The polynomial 
	$x_2^{l}+x_3^{m}-M$ is absolutely irreducible of degree $l>m\geq 2$, as one can see by applying the Eisenstein criterion over $\bar{\QQ}[x_3]$. Indeed, 
	the polynomial $x_3^{m}-M\in\bar{\QQ}[x_3]$ is square-free, since it has non-zero discriminant. 
	Having checked all the hypotheses of Theorem \ref{thm:bound}, it now follows that the cardinality of points that do not lie on any   lines or   conics contained in $X$  
	is 
	$$
	\ll_\ve \left( B^{\frac{4}{3}+\frac{1}{\sqrt{k-1}}(1-\frac{1}{2l})}+B^{1+\frac{2}{\sqrt{k-1}}(1-\frac{1}{2l})}\right)B^\ve,
	$$
	since $l> m$. 
	This is satisfactory for Corollary 
	\ref{thm:kl}.

	It remains to deal with the  points that lie on lines and conics contained in the hypersurface $f=0$. 
	For this we can  adapt an argument of Newman and Slater \cite{waring}.

	\begin{lemma}\label{lemma:wron}
		Let $b\in \bar\QQ^*$,
		let 
		$l_1,\dots,l_r\in \NN$ and let 
		$\gamma_1(t),\dots,\gamma_r(t)\in \overline{\QQ}[t]$ such that
		\begin{equation}\label{eq:gamman}
			\sum_{i=1}^r\gamma_i^{l_i}=b.
		\end{equation}
		Put   $d_i=\deg(\gamma_i)$ for $1\leq i\leq r$ and 
		assume that $\gamma_1^{l_1},\dots,\gamma_r^{l_r} $ are linearly independent over $\bar \QQ$. Then
		$$
		\max_{1\leq i\leq r} d_il_i
		\leq (r-1)(d_1+\dots+d_r)-\frac{r(r-1)}{2}.
		$$
	\end{lemma}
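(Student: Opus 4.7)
The plan is to use a Wronskian argument in the spirit of Mason--Stothers. Setting $f_i = \gamma_i^{l_i}$, I would form the $r\times r$ Wronskian
$$
W = \det\!\left(f_j^{(i-1)}\right)_{1\leq i,j\leq r}.
$$
Since $\gamma_1^{l_1},\dots,\gamma_r^{l_r}$ are linearly independent over $\bar\QQ$ (a field of characteristic zero), the standard criterion gives $W\neq 0$. The strategy is to sandwich $\deg W$ between a lower bound arising from the multiplicities of $\gamma_j$ that divide each column, and an upper bound that uses the relation \eqref{eq:gamman}.

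For the upper bound, choose $i_0$ realising $D := \max_i d_il_i = d_{i_0}l_{i_0}$ and replace column $i_0$ by the sum of all columns. Since $\sum_j f_j = b$ is constant, its derivatives all vanish, so this operation turns column $i_0$ into $(b,0,\dots,0)^{T}$. Cofactor expansion gives $W = \pm b\cdot\det(M')$, where $M'$ is the $(r-1)\times(r-1)$ minor obtained by deleting row $1$ and column $i_0$. As $\deg f_j^{(i-1)}\leq d_jl_j - (i-1)$, a routine degree count on $M'$ then yields
$$
\deg W \leq \sum_{j\neq i_0} d_jl_j - \frac{r(r-1)}{2}.
$$

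For the lower bound, I would prove by induction on $k$ that $\gamma_j^{l_j-k}$ divides $f_j^{(k)}$ whenever $k\leq l_j$: starting from $f_j = \gamma_j^{l_j}$, differentiating $\gamma_j^{l_j-k+1}Q$ produces $\gamma_j^{l_j-k}\bigl((l_j-k+1)\gamma_j' Q + \gamma_j Q'\bigr)$, which retains the power $\gamma_j^{l_j-k}$. Consequently every entry of column $j$ is divisible by $\gamma_j^{\max(0,\,l_j-r+1)}$, so that $\prod_j \gamma_j^{\max(0,\,l_j-r+1)}$ divides the (nonzero) polynomial $W$, giving
$$
\deg W \geq \sum_j \max(0,\,l_j-r+1)\, d_j.
$$

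Combining the two estimates,
$$
D \leq \sum_j \bigl[l_j - \max(0,\,l_j-r+1)\bigr] d_j - \frac{r(r-1)}{2} = \sum_j \min(l_j,\,r-1)\, d_j - \frac{r(r-1)}{2} \leq (r-1)\sum_j d_j - \frac{r(r-1)}{2},
$$
which is the claim. The only subtle point — and thus the main place to be careful — is the edge case $l_j < r-1$, where column $j$ need not be divisible by any positive power of $\gamma_j$; the truncation $\max(0,\,l_j-r+1)$ in the divisibility claim handles this automatically, but one must verify the induction still applies for the entries in rows $i-1 \leq l_j$ and notes that the remaining rows contribute no extra power.
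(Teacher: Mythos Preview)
Your argument is correct and is essentially the same Wronskian/Mason--Stothers argument as in the paper: both form the Wronskian $W$ of the $\gamma_j^{l_j}$, use the relation $\sum_j \gamma_j^{l_j}=b$ to replace the column of maximal weight by $(b,0,\dots,0)^T$ for the degree upper bound, and pull out the factor $\prod_j \gamma_j^{\max(0,\,l_j-r+1)}$ from the columns for the lower bound. Your cofactor-expansion presentation and the explicit induction for the divisibility are slightly more detailed than the paper's, but the content is identical.
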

	\begin{proof}
		Let $e_i=d_il_i$ for $1\leq i\leq r$. Let $s_i=\max\{l_i-r+1,0\}$.
		Let $W$ be the Wronskian 
		$W(\gamma_1^{l_1},\gamma_2^{l_2},\dots,\gamma_r^{l_r}).$ 
		This is  non-zero, since the polynomials are assumed to be  linearly independent. 
		On noticing that $\gamma_1^{s_1}\gamma_2^{s_2}\cdots\gamma_r^{s_r}$ must divide $W$, so it follows that 
		$$
		\deg W\geq \sum_{1\leq i\leq r}s_id_i.
		$$ 
		Let $e=\max_{1\leq i\leq r}e_i$ and assume without loss of generality that $e=e_1$.
		It follows from	\eqref{eq:gamman} that the Wronskian $W(b,\gamma_2^{l_2},\dots,\gamma_r^{l_r})$ is equal to $W$. Hence
		\begin{align*}
			\deg W&\leq e_2+e_3+\dots+e_r-1-2-\dots-(r-1)\\ &=\left(\sum_{1\leq i\leq r}e_i\right)-e-\frac{r(r-1)}{2}.
		\end{align*}
		But then 
		$$
		e\leq \sum_{1\leq i\leq r}e_i-\sum_{1\leq i\leq r}s_id_i-\frac{r(r-1)}{2}=\sum_{1\leq i\leq r} \left(d_i\left(l_i-s_i\right)-\frac{r-1}{2}\right).
		$$
		Clearly 
		$l_i-s_i
		\leq l_i-(l_i-r+1)=r-1
		$, which leads to the statement of the lemma.
	\end{proof}

	\begin{corollary}\label{cor:coop}
		Assume that  $k\geq 13$ is odd,  $k> l> m\geq 2$,
		and $N\in \bar\QQ^*$.  Then any lines or conics 
		contained in the hypersurface 
		$ 		x_1^{k}+x_2^{l}+x_3^{m}+x_4^{k}=N$ must be contained in one of the subvarieties cut out by the union of equations 
		\begin{equation}\label{eq:tesco}
			V(x_1^k+x_4^k)\cup V(x_2^{l}+x_3^{m})\cup V(x_1^k+x_2^{l}+x_4^k)
			\cup V(x_1^k+x_3^{m}+x_4^k).
		\end{equation}
	\end{corollary}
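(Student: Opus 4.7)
Parametrize the line or conic $C$ by a non-constant map $t \mapsto (\gamma_1(t),\ldots,\gamma_4(t))$ with $\gamma_i\in\bar\QQ[t]$ of degree $d_i$, where $\max d_i = 1$ for a line and $\max d_i = 2$ for an irreducible conic. Containment of $C$ in the hypersurface yields the polynomial identity
\[
\gamma_1^k + \gamma_2^l + \gamma_3^m + \gamma_4^k = N
\]
in $\bar\QQ[t]$. The strategy is to suppose that $C$ lies in none of the four subvarieties of \eqref{eq:tesco} and derive a contradiction by applying Lemma \ref{lemma:wron} to suitable subsums.

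Let $I = \{i : \gamma_i \text{ is non-constant}\}$ and set $M = N - \sum_{i\notin I}\gamma_i^{l_i}$, so that $\sum_{i\in I}\gamma_i^{l_i} = M$ and $|I|\geq 2$. The configurations that place $C$ in one of the four subvarieties are precisely (i) $M = 0$ with $I = \{1,4\}$ or $\{2,3\}$, putting $C$ in $V(x_1^k+x_4^k)$ or $V(x_2^l+x_3^m)$, and (ii) $|I| = 3$ with $M = 0$ and $\gamma_2$ or $\gamma_3$ constant, putting $C$ in $V(x_1^k+x_3^m+x_4^k)$ or $V(x_1^k+x_2^l+x_4^k)$. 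All remaining $M = 0$ configurations are ruled out by degree matching: mixed pairs such as $I = \{1,2\}$ require $d_1 k = d_2 l$, which is impossible for $k$ odd with $k > l > m \geq 2$ and $d_i\leq 2$; the subcase $\gamma_2^l + \gamma_3^m + \gamma_4^k \equiv 0$ with all three non-constant is excluded by Mason--Stothers (after extracting any common factor), giving $\max d_i l_i \leq \sum d_i - 1 \leq 5$, which contradicts $d_4 k \geq 13$.

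In the remaining cases $M \neq 0$, if $\{\gamma_i^{l_i}\}_{i\in I}$ is linearly independent over $\bar\QQ$ then Lemma \ref{lemma:wron} gives $\max d_i l_i \leq (|I|-1)\sum_{i\in I}d_i - \binom{|I|}{2} \leq 18$; combined with $k \geq 13$ this forces $d_i\leq 1$ for every $i\in I\cap\{1,4\}$, which is already a contradiction when $|I|\leq 3$. In the critical case $|I|=4$ we have $d_1,d_4\leq 1$, and the identity $\gamma_1^k + \gamma_4^k = N - \gamma_2^l - \gamma_3^m$ has degree less than $k$ (since $k$ odd with $d_i \leq 2$ and $k > l > m \geq 2$ precludes both $d_2 l = k$ and $d_3 m = k$), so the leading coefficients of $\gamma_1,\gamma_4$ satisfy $b_1^k + b_4^k = 0$. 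Writing $\gamma_4 = -\zeta\gamma_1 + \delta$ with $\zeta$ a $k$-th root of unity and $\deg\delta\leq 0$, expansion gives $\gamma_1^k + \gamma_4^k = k\zeta^{k-1}\gamma_1^{k-1}\delta + O(\gamma_1^{k-2}\delta^2)$, which has degree $k-1 + \deg\delta$; matching against $N - \gamma_2^l - \gamma_3^m$ forces $\delta = 0$, whence $k$ odd yields $\gamma_1^k + \gamma_4^k \equiv 0$ and $C \subseteq V(x_1^k+x_4^k)$, contrary to our assumption. \emph{The main obstacle} is the borderline regime where $l$ or $m$ is close to $k$, in which the leading-degree comparison does not immediately force $\delta = 0$ and one must iterate the comparison to sub-leading coefficients; the linearly dependent $M \neq 0$ sub-cases are handled by substituting the dependence relation into the identity to reduce $|I|$ and then repeating the argument on the smaller sub-sum.
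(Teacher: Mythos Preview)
Your approach shares the paper's strategy of parametrizing and invoking Lemma~\ref{lemma:wron}, but the case analysis diverges and leaves a genuine gap in the critical $|I|=4$ linearly independent sub-case. After obtaining $d_1,d_4\leq 1$ from the crude bound $\max_i d_il_i\leq 18$, you switch to a leading-coefficient comparison to force $\gamma_1^k+\gamma_4^k\equiv 0$. But the step ``matching against $N-\gamma_2^l-\gamma_3^m$ forces $\delta=0$'' is unjustified: with $\delta\neq 0$ the left side has degree exactly $k-1$, and this is compatible with the right side when, for instance, $l=k-1$ and $d_2=1$. You flag this as the main obstacle and propose iterating to sub-leading coefficients, yet give no argument that the iteration closes; it is not clear that it does without further input.

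The paper sidesteps this entirely. Once $d_1=d_4=1$, one simply feeds these values back into the \emph{same} Wronskian inequality, obtaining
\[
k=d_1k\leq 3(1+d_2+d_3+1)-6\leq 12,
\]
which already contradicts $k\geq 13$; no coefficient matching is needed. More broadly, the paper's organizing device (after first disposing of the case where some $\gamma_i$ is constant) is to pass from $\sum_i\gamma_i^{l_i}=N$ to a minimal linearly independent sub-sum $\sum_j a_{i_j}\gamma_{i_j}^{l_{i_j}}=N$, absorbing the scalars into the $\gamma$'s over $\bar\QQ$, and then to apply Lemma~\ref{lemma:wron} once for each possible size $r\in\{1,2,3,4\}$. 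This uniformly handles both your linearly dependent branch and the $M\neq 0$ cases, and avoids Mason--Stothers altogether --- a welcome simplification, since your ``after extracting any common factor'' step is delicate when the exponents $k,l,m$ differ and the quotients need not retain the perfect-power shape required for the bound $\sum d_i-1$.
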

	\begin{proof}
		Assume that there is a  line or conic contained in the hypersurface, so that 
		\begin{equation}\label{eq:gamma4n}
			\gamma_1(t)^{k}+\gamma_2(t)^{l}+\gamma_3(t)^{m}+\gamma_4(t)^{k}=N,
		\end{equation}
		with $\gamma_i\in \bar\QQ[t]$ and 
		$d_i=\deg(\gamma_i)\leq 2$, for $1\leq i\leq 4$, which are not all constant. 
		We begin by dispatching the case that one of the polynomials $\gamma_1,\dots,\gamma_4$ is constant. 
		
		We first assume that $d_4=0$.
		If $d_1=2$, then the
		left hand side  of  \eqref{eq:gamma4n} has degree $2k$. If $d_1=1$ and $d_2=2$, then the
		left hand side of \eqref{eq:gamma4n} has degree $\max\{k,2l\}$. 
		(Notice that $2l\neq k$, since $k$ is odd.) 
		If $d_1=d_2=1$ and $d_3=2$, then the degree is $\max\{k,2m\}$. If $d_1=1$ and $d_2,d_3\leq 1$, then the degree is $k$. In all of these cases the 
		left hand side of \eqref{eq:gamma4n} has strictly  positive degree and so cannot be equal to the constant $N$. Hence  $\gamma_1$ must also be constant. If $\gamma_1^k+\gamma_4^k=N$, then the curve lies on the surface cut out by the equation $x_2^{l}+x_3^{m}=0$ and we are done. If $\gamma_1^k+\gamma_4^k\neq N$, then $\gamma_2(t)^{l}+\gamma_3(t)^{m}=b$ for $b\in \bar\QQ^*$. Moreover,  $\gamma_2^{l}$ and $\gamma_3^{m}$ must be linearly independent, since at least one of  $\gamma_2$ or $\gamma_3$ is non-constant. But then it follows from Lemma \ref{lemma:wron} that
		$$
		\max\{d_2l,d_3 m\}\leq d_2+d_3-1.
		$$
		If $d_2=2$, then $2l\leq 1+d_3\leq 3$, which is impossible. If $d_2\leq1$, then $d_3m\leq 1+d_3-1$, which is also impossible. Thus, the case $d_4=0$ is completed. Notice that the case $d_1=0$ is identical, so from now on we can assume $d_1d_4\neq 0$.
		
		We now assume $d_3=0$ (and that $d_1d_4\neq 0$). If $\gamma_3(t)^m=N$, then the curve lies on $x_1^k+x_2^{l}+x_4^k=0$ and we are done. If not, we have $\gamma_1(t)^{k}+\gamma_2(t)^{l}+\gamma_4(t)^{k}=N'$ with $N'\neq 0$. If the three polynomials are linearly independent, then Lemma \ref{lemma:wron} implies that
		$$
		k\leq \max\{kd_1,ld_2,kd_4\}\leq 2\cdot 6-3=9,
		$$
		which is impossible. If they are linearly dependent, we must have $a_1\gamma_1(t)^{k}+a_2\gamma_2(t)^{l}=N'$ for $a_1,a_2\in\bar\QQ$ or $a_1\gamma_1(t)^{k}+a_4\gamma_4(t)^{k}=N'$ for $a_1,a_2\in\bar\QQ$. In the first case, if $a_1=a_2=0$, then the equation cannot hold. If $a_1\neq 0$, then $\deg(a_1\gamma_1(t)^{k})\neq \deg(a_2\gamma_2(t)^{l})$ and the equation cannot hold. If $a_1=0$ and $a_2\neq 0$, then $d_2=0$ and we would have $\gamma_1(t)^{k}+\gamma_4(t)^{k}=N''$ with $N''\in\bar\QQ$. If $N''=0$, then the curve lies on $V(x_1^k+x_4^k)$. If not, the equation cannot hold, on applying  Lemma \ref{lemma:wron} once again. The case $a_1\gamma_1(t)^{k}+a_4\gamma_4(t)^{k}=N'$ is similar and so the case $d_3=0$ is done. Notice that the case $d_2=0$ is identical, and so the case $d_i=0$ for some $i$ is now completed.
		
		We  may now proceed under the assumption that $d_i\in  \{1,2\}$, for all $1\leq i\leq 4$.
		For notational convenience we write $(l_1,l_2,l_3,l_4)=(k,l,m,k)$.
		Let $r\in \NN$ be the least  integer such that we can write
		$$			\gamma_1(t)^{l_1}+\gamma_2(t)^{l_2}+\gamma_3(t)^{l_3}+\gamma_4(t)^{l_4}=
		\sum_{j=1}^r a_{i_j}\gamma_{i_j}(t)^{l_{i_j}},
		$$
		for  $\{i_1,\dots, i_r\}\subseteq \{1,2,3,4\}$, with $a_{i_j}\in \bar\QQ^*$ and   $\gamma_{i_1}^{l_{i_1}},\dots ,\gamma_{i_r}^{l_{i_r}}$  linearly independent.
		
		If $r=4$ then  Lemma \ref{lemma:wron} implies that 
		$$
		\max\{d_1,d_4\}k\leq  \max_{1\leq j\leq 4}d_jl_j
		\leq 
		3(d_1+\dots+d_4)-6.
		$$
		If $d_1=d_4=1$ then this  implies that 
		$k\leq 18-6=12$, which is a contradiction. 
		If $\max\{d_1,d_4\}=2$ then we obtain 
		$2k\leq 24-6=18$, which is also impossible.
		If $r=3$ then at least one of $\gamma_1$ or $\gamma_4$ must belong to the sum.
		Assuming without loss of generality we fall in the former case, 
		it follows from
		Lemma \ref{lemma:wron} that 
		$$
		d_1k\leq  \max_{1\leq j\leq 3}d_{i_j}l_{i_j}
		\leq 2(d_{1}+d_{i_2}+d_{i_3})-3.
		$$
		If $d_1=1$ this implies that $k\leq 10-3=7$,
		whereas if $d_1=2$ then we obtain 
		$2k\leq 12-3=9$. 	
		Neither of these is possible.  
		Suppose next that $r=2$. Then 
		Lemma \ref{lemma:wron} implies that
		$$
		\max\{d_{i_1}l_{i_1},d_{i_2}l_{i_2}\}\leq d_{i_1}+d_{i_2}-1.
		$$
		But $\min\{l_{i_1},l_{i_2}\}\geq 2$, whence 
		$$
		2\max\{d_{i_1},d_{i_2}\}\leq \max\{d_{i_1}l_{i_1},d_{i_2}l_{i_2}\}\leq d_{i_1}+d_{i_2}-1\leq 2\max\{d_{i_1},d_{i_2}\}-1,
		$$
		which  is impossible.
		Finally,  the case $r=1$ is impossible since  we would then have $a_i\gamma_{i}(t)^{l_i}=N$, for a non-constant polynomial $\gamma_i$, for some $i\in \{1,\dots,4\}$.  
	\end{proof}

	We may now conclude the proof of Corollary \ref{thm:kl}, for which it remains to control the contribution from 
	lines or conics contained in the hypersurface $X\subset \mathbb{A}^4$ defined by
	$$
	x_1^k+x_2^{l}+x_3^{m}+x_4^k=N.
	$$ 
	Assuming  that  $k,l,m,N$ satisfy the hypotheses of the corollary, it follows from Corollary
	\ref{cor:coop} that lines or conics on the hypersurface are all contained in the union of surfaces cut out by \eqref{eq:tesco}.  Let $Y\subset \mathbb{A}^3$ be the surface obtained by intersecting 
	$X$ with the first  equation $x_1^k+x_4^k=0$ in \eqref{eq:tesco}. 
	There are $O(B)$ choices with  $|x_1|,|x_4|\leq B$ and $x_1^k+x_4^k=0$. We are then left with counting $|x_2|,|x_3|\leq B$ for which 
	$x_2^{l}+x_3^{m}=N$. 
	It follows  from
	Eisenstein's criterion over $\bar{\QQ}[x_3]$, as previously, 
	that	the underlying polynomial is  absolutely irreducible of degree $l$.
	But then
	Bombieri--Pila \cite[Thm.~5]{BP} reveals there to be $O_\ve(B^{1/l+\ve})$ choices for $x_2,x_3$. 
	In summary, the integral points 
	$\uu{x}\in S(\ZZ)$  make a satisfactory  overall contribution $O_\ve(B^{4/3+\ve})$ to 
	$N_2(B)$, since $l\geq 3$.
	The treatment of the other subvarieties defined by \eqref{eq:tesco} is similar, which thereby completes the proof of Corollary \ref{thm:kl}.


	
	\section*{Acknowledgments} 
The authors are grateful to Jakob Glas  
for helpful conversations and to 
Per Salberger for comments that led to an improvement in Corollary~\ref{thm:squares}.
Thanks are also due to the anonymous referee for numerous useful comments. 


	\bibliographystyle{amsplain}

%
%
	
	\begin{dajauthors}
		\begin{authorinfo}[tb]
			Tim Browning\\
			Institute of Science and Technology Austria, Am Campus 1, 3400 Klosterneuburg, Austria\\
			tdb\imageat{}ist.ac.at
		\end{authorinfo}
		\begin{authorinfo}[mv]
	Matteo Verzobio\\
	Institute of Science and Technology Austria, Am Campus 1, 3400 Klosterneuburg, Austria\\
	matteo.verzobio\imageat{}gmail.com
		\end{authorinfo}
	\end{dajauthors}
	
\end{document}